\documentclass[11pt]{article} 

\usepackage{amssymb}
\usepackage{amsthm}
\usepackage{amsmath}
\usepackage[dvipdfmx]{graphicx}

\usepackage{arydshln}
\usepackage{calc}
\usepackage{enumitem}
\usepackage{hyperref}
\usepackage{optidef}
\usepackage{aliascnt}

\makeatletter
\@tempcnta\z@
\loop\ifnum\@tempcnta<26
\advance\@tempcnta\@ne
\expandafter\edef\csname f\@Alph\@tempcnta\endcsname{\noexpand\mathfrak{\@Alph\@tempcnta}}
\expandafter\edef\csname b\@Alph\@tempcnta\endcsname{\noexpand\mathbb{\@Alph\@tempcnta}}
\expandafter\edef\csname c\@Alph\@tempcnta\endcsname{\noexpand\mathcal{\@Alph\@tempcnta}}
\repeat
\makeatother

\newcommand{\trans}[1]{{#1}^\mathrm{T}}

\newcommand{\innerp}[2]{{#1} \bullet {#2}}

\newcommand{\submatrix}[3]{{#1}_{{#2}{#3}}}
\newcommand{\idiagonal}[1]{\mathrm{d}(#1)}
\newcommand{\ioffdiagonal}[1]{\mathrm{od}(#1)}

\DeclareMathOperator{\trace}{tr}
\DeclareMathOperator{\rank}{rank}

\newcommand{\newaliastheorem}[3]{%
    \newaliascnt{#2}{#1}%
    \newtheorem{#2}[#2]{#3}%
    \aliascntresetthe{#2}%
    \expandafter\providecommand\csname #2autorefname\endcsname{#3}%
}

\newtheorem{theorem}{Theorem}[section]

\newaliastheorem{theorem}{define}{Definition}
\newaliastheorem{theorem}{lemma}{Lemma}
\newaliastheorem{theorem}{corollary}{Corollary}
\newaliastheorem{theorem}{proposition}{Proposition}
\newaliastheorem{theorem}{assumption}{Assumption}

\usepackage{mathtools} 
\usepackage{comment}


\usepackage{fancybox,color}
\definecolor{lred}{rgb}{1,0.8,0.5}
\definecolor{lblue}{rgb}{0.8,0.8,1}
\definecolor{dred}{rgb}{0.6,0,0}
\definecolor{dblue}{rgb}{0,0,0.7}
\definecolor{violet}{rgb}{0.5804,0.0000,0.8275}
\definecolor{purple}{rgb}{0.2400,0.5700,0.2500}
\definecolor{TGreen}{rgb}{0,0.50,0.10}
%


\def\@themcountersep{}
\setlength{\evensidemargin}{0.0mm}
\setlength{\oddsidemargin}{0.0mm}
\setlength{\textwidth}{6.45in}
\setlength{\textheight}{9.0in}
\setlength{\topmargin}{-2cm}
\setlength{\itemsep}{-\parsep}

\setlength{\parskip}{\smallskipamount}

%
%
\def\0{\mbox{\bf 0}}
\def\1{\mbox{\bf 1}}
\def\2{\mbox{\bf 2}}
\def\3{\mbox{\bf 3}}
\def\4{\mbox{\bf 4}}
\def\5{\mbox{\bf 5}}
\def\6{\mbox{\bf 6}}
\def\7{\mbox{\bf 7}}
\def\8{\mbox{\bf 8}}
\def\9{\mbox{\bf 9}}

\newdimen\zhige \zhige=0pt
\def\chige#1{{\setbox\zhige\hbox{#1}\ifdim\ht\zhige=1ex\accent24 #1%
  \else\ooalign{\unhbox\zhige\crcr\hidewidth\char24\hidewidth}\fi}}

\def\e{\mbox{\boldmath $e$}}

\def\DC{\mbox{$\cal D$}}
\def\EC{\mbox{$\cal E$}}
\def\FC{\mbox{$\cal F$}}

\def\VC{\mbox{$\cal V$}}

\def\Real{\mbox{$\mathbb{R}$}}

\def\SMAT{\mbox{$\mathbb{S}$}}

\usepackage[normalem]{ulem}
\ifx11 

\fi

\title{Exact SDP relaxations of  quadratically constrained quadratic programs with forest structures }

\author{ 
Godai Azuma\thanks{Department of  Mathematical and Computing Science,
        Tokyo Institute of Technology, 2-12-1-W8-29 Oh-Okayama, Meguro-ku, Tokyo 152-8552, Japan. }
\and 
\normalsize
        Mituhiro Fukuda\thanks{Department of  Mathematical and Computing Science,
        Tokyo Institute of Technology, 2-12-1-W8-41 Oh-Okayama, Meguro-ku, Tokyo 152-8552, Japan
         ({\tt mituhiro@is.titech.ac.jp}). 
	} 
\and 
\normalsize 
	Sunyoung Kim\thanks{Department of Mathematics, Ewha W. University, 52 Ewhayeodae-gil, Sudaemoon-gu, 
	Seoul 	03760, Korea  ({\tt skim@ewha.ac.kr}). The research was supported
        by  NRF 2017-R1A2B2005119.}
 \and 	
\normalsize
        Makoto Yamashita\thanks{Department of  Mathematical and Computing Science,
            Tokyo Institute of Technology, 2-12-1-W8-29 Oh-Okayama, Meguro-ku, Tokyo 152-8552, Japan        
             ({\tt Makoto.Yamashita@c.titech.ac.jp}).
	This research was partially supported by JSPS KAKENHI (Grant number: 20H04145).
 	}	
}
\date{September, 2020}

\begin{document}

\maketitle

\begin{abstract} \noindent
    We study the exactness of the semidefinite programming (SDP) relaxation of quadratically constrained quadratic programs (QCQPs).
   With the aggregate sparsity matrix  from the data matrices of a QCQP with $n$ variables, 
   the rank and positive semidefiniteness of the matrix are examined.
   We prove that  if the rank of the aggregate sparsity matrix is not less than $n-1$ and the matrix remains positive semidefinite
    after replacing  some off-diagonal nonzero elements  with zeros, then the standard SDP relaxation provides an exact optimal solution for the QCQP under feasibility assumptions. 
    In particular, we demonstrate that QCQPs with forest-structured aggregate sparsity matrix,  such as the tridiagonal or arrow-type matrix,
    satisfy the exactness condition on the rank.
     The exactness is attained by considering the feasibility of the dual SDP relaxation, the strong duality of SDPs, and a sequence of QCQPs with perturbed objective functions, under the assumption that the feasible region is compact. We generalize our result for a wider class of QCQPs by applying simultaneous tridiagonalization on the data matrices. Moreover,  simultaneous tridiagonalization is applied to a matrix pencil so that QCQPs with two constraints can be solved exactly by the SDP relaxation.
\end{abstract}

\vspace{0.5cm}

\noindent
{\bf Key words. } Quadratically constrained quadratic programs, exact semidefinite relaxations,
forest graph, the rank of aggregated sparsity matrix.

\vspace{0.5cm}

\noindent
{\bf AMS Classification. } 
90C20,  	
90C22,  	
90C25, 	
90C26.  	


\section{Introduction} 

We consider a quadratically constrained quadratic program (QCQP):
\begin{mini}
    {}{\trans{x}Q^0x + 2\trans{q_0}x}{\label{eq:qcqp}}{}
    \addConstraint{\trans{x}Q^px + 2\trans{q_p}x}{\leq b_p, \quad}{p = 1, \ldots, m,}
\end{mini}
where $Q^p \in \bS^n$, $q^p \in \bR^n$,
and $b_p \in \bR$ for $p = 0, 1, \ldots, m$ are problem data and $x \in \Real^n$ is  the variable.
Nonconvex QCQPs of the form \eqref{eq:qcqp} arise from a wide range of applications, for instance,
sensor network localization problems~\cite{Biswas2006},
quadratic assignment problems~\cite{Koopmans1957, Povh2009},
equally deployment problems~\cite{safarina2020conic, safarina2019polyhedral},
and optimal power flow problems~\cite{BOSE2015,Lavaei2012, Zhou2019}.
 As  some NP-hard problems can be reformulated by QCQPs, 
they are known to be NP-hard  in general.
Nonconvex QCQPs are approximately solved  with tractable convex relaxations such as 
semidefinite programming (SDP) relaxations.

SDP relaxations of QCQPs are regarded as a powerful convex relaxation that provides
tight lower bounds for \eqref{eq:qcqp} 
\cite{NESTROV2000}.
Both   theoretical and computational aspects of 
 SDP relaxations  \cite{ANJOS2012,Bao2011, BOSE2015,Burer2019,kim2003exact,NESTROV2000,Wang2019,Wang2020,WOLKOWICZ2000,Zhou2019}
have been extensively studied.  
 For  QCQPs in the form of \eqref{eq:qcqp},   by letting $X = x\trans{x}$
and relaxing it to $X - x\trans{x} \succeq O$,
we have the standard SDP relaxation    as 
\begin{mini}
    {}{\innerp{Q^0}{X} + 2\trans{q_0}x}{\label{eq:sdr}}{}
    \addConstraint{\innerp{Q^p}{X} + 2\trans{q_p}x}{\leq b_p, \quad}{p = 1, \ldots, m}
    \addConstraint{  X}{\succeq x\trans{x},}
\end{mini}
where $\innerp{Q^p}{X}$ denotes the Frobenius inner product of $Q^p$ with $X$,
and $X \succeq x\trans{x}$ denotes that $X - x\trans{x}$ is positive semidefinite.
Computational studies on the SDP relaxation \eqref{eq:sdr} for an approximate lower bound of \eqref{eq:qcqp}
have been focused on improving the computational efficiency of solution methods.
Primal-dual interior-point methods or bundle's methods are some of widely used computational methods to solve
large-sized SDP relaxations \cite{WOLKOWICZ2000}.
 In particular, the aggregate sparsity of
  data matrices has been successfully used to reduce the size of the SDP relaxation 
when imploying primal-dual interior-point methods \cite{fukuda2001exploiting, kim2011exploiting,nakata2003exploiting}.
Recently, more efficient algorithms based on the first-order methods, for instance, SDPNAL+ \cite{YST2015} 
and  BBCPOP \cite{ITO2018},
 have been introduced for large-scale QCQPs.

For the theoretical study on the SDP relaxation, the rank of the SDP solution 
plays an important role. As the feasible set of the SDP relaxation is  larger  than that of the original QCQP in general,
an approximate solution to nonconvex QCQP \eqref{eq:qcqp} is usually obtained by solving the SDP relaxation \eqref{eq:sdr}.
 The rank of the SDP solution can be determined after or prior to  solving \eqref{eq:sdr}. 
If the rank of the  computed SDP solution is one, or the matrix $[1, \trans{(x^*)}; x^*, X^*]$ is rank-1, then the SDP relaxation is called exact.
With the rank-1 SDP solution, $x^*$ satisfying $X^* = x^*\trans{x^*}$ recovers the relaxed constraint. 
For some class of QCQPs, however, the rank of the SDP solution is known prior to solving \eqref{eq:sdr}.
QCQPs with nonpositive off-diagonal data matrices were known to be solved exactly by the SDP relaxation in  \cite{BOSE2015,kim2003exact,Lavaei2012}.
In particular, the
exactness of the SDP relaxation for QCQPs with complex variables associated with
connected and acyclic graphs was studied in
\cite{BOSE2015} where 
some sign properties of $Q^p$ $(p=1,\ldots,m)$ were assumed.
Low rank SDP solutions and  the upper bounds  for the rank of the SDP solution were also studied by
Pataki~\cite{Pataki1998},  
Laurent and Vavitsiotis~\cite{Laurent2014}, 
and Madani et al. \cite{Madani2017}.

Recently,
Burer and Ye  in \cite{Burer2019} proposed a method  to determine the rank of SDP solutions for some class of QCQPs 
prior to solving the SDP relaxation.  
For diagonal QCQPs with diagonal $Q^0, Q^1, \ldots, Q^m$, they showed that 
the rank of  SDP solutions  is bounded above by $n - f + 1$,
where the feasibility number $f$ is determined by considering  the systems for $j=1,\ldots,n$:
\begin{align*}
  &  \innerp{Q^0}{X} + [q_0]_jx_j = -1, \\
   & \innerp{Q^i}{X} + [q_i]_jx_j \leq 0,\quad \forall i = 1, \ldots, m, \\
   & \text{$X$: diagonal},\quad X_{kk} \geq 0,\quad \forall k \neq j.
\end{align*}
More precisely,
$f= | \left\{j  \,\middle|\, \text{the above system with $j$ is feasible, $1 \leq j \leq n$ }\right\}|$.
Some exactness conditions for the diagonal QCQPs were provided
by analyzing the case where the upper bound $n-f+1$ equals one, and  their result was extended to random or
 non-random QCQPs. 
More recently, Wang and Kilin\chige{c}-Karzan~\cite{Wang2020} 
analyzed the faces of the convex Lagrangian multipliers $\Gamma$
of the SDP relaxation for a QCQP. They stablished conditions for
which the exact SDP relaxation holds and, in particular, it includes the
result of \cite{Burer2019} for diagonal QCQPs.

Special classes of QCQPs that
 admit the exact SDP relaxation has also been studied. For instance,
the  Generalized Trust-Region Subproblem (GTRS)
that minimizes a quadratic objective over a quadratic constraint is such  a class.
The GTRS is, in fact, a QCQP \eqref{eq:qcqp} with only one constraint ($m = 1$).
It generalizes the classical Trust-Region Subproblem (TRS) in which
a quadratic objective is minimized over a Euclidean ball.
Since the objective of the TRS is allowed to be nonconvex,
the TRS is  nonlinear and nonconvex;
however,  its SDP relaxation is always exact.
The GTRS shares nice properties with the TRS.
In fact, under the Slater's condition
due to the  S-lemma~\cite{Polik2007},
the GTRS admits an exact SDP relaxation.
Generalized eigenvalue problems are closely related to the GTRS.
Adachi and Nakatsukasa in ~\cite{Adachi2019} developed an eigenvalue-based algorithm for the GTRS.
Recently, Wang and Kilin\chige{c}-Karzan~\cite{Wang2019} analyzed
the convex hull of a nonconvex feasible set
using the generalized eigenvalue of a matrix pencil $Q^0 - \lambda Q^1$
which is also used in \autoref{ssec:gtrs} of this paper. 

The main purpose of  this paper is to present
 sufficient conditions for the 
 SDP relaxation to be exact for some  classes of QCQPs,  considering the aggregate sparsity matrix of  data matrices $Q^0, Q^1, \ldots, Q^m$.
We assume that the aggregate sparsity matrix is positive semidefinite
in addition to mild feasibility assumptions.
We show that
if (i)  the rank of the aggregate sparsity matrix is not less than $n - 1$ for any nonzero values of the matrix and
(ii) the positive semidefiniteness of the matrix is maintained even when some of off-diagonal elements of the matrix become 
zeros; then
 the SDP relaxation of the  QCQPs is exact.  
The aggregate sparsity matrices satisfying (i) and (ii) focused in this paper are the matrices associated to forest-structured graphs 
such as tridiagonal and arrow-type matrix.
We call QCQPs \eqref{eq:qcqp}   forest-structured QCQPs, tridiagonal QCQPs, or arrow-type QCQPs if the indices of  maximal cliques obtained from the aggregate sparsity matrix form a ``forest'', a tridiagonal matrix, or an arrow-type matrix,  respectively, sometimes with permutation.
These classes of QCQPs admit the exact SDP relaxation under some assumptions on the feasible set of  QCQPs.
We also extend our results on tridiagonal QCQPs to general QCQPs via simultaneous tridiagonalization.

The aggregate sparsity matrix from the data matrices of QCQPs in this paper
  is employed to examine the rank of the SDP dual solution, while
 it  has been mostly studied for improving computational efficiency of solving the SDP relaxation \cite{fukuda2001exploiting,kim2011exploiting,nakata2003exploiting,SHEEN2019}.
Moreover, our results show that 
 the exact SDP relaxation can be proved for forest-structured QCQPs, regardless of  signs of  data matrices \cite{BOSE2015,kim2003exact,SOJOUDI2014exact}. 
Note that Bose et al.~\cite{BOSE2015} also considered connected 
and acyclic graphs associated with the aggregate sparsity matrices.  For the classes of
forest-structured QCQPs, including tridiagonal or arrow-type QCQPs, the second-order cone relaxation also provides the exact 
optimal solution \cite{kim2003exact}.
We also note that the exactness conditions  by Burer and Ye~\cite{Burer2019}
cannot be used for determining the exactness of the SDP relaxation for tridiagonal QCQPs
since  diagonal QCQPs  in \cite{Burer2019} is  a  special case of  tridiagonal QCQPs.

For tridiagonal QCQPs, we consider  at most $n-1$ 
systems corresponding to the elements of the positive semidefinite variable matrix in the dual SDP relaxation, which is described as \eqref{eq:system} in \autoref{sec:exactness_of_triqcqp}.
Each system consists of  constraints in the dual SDP relaxation of \eqref{eq:sdr} with $q_p=0$ for $p=0, 1,\ldots,m$ and
 the constraint that an
element of the positive semidefinite variable matrix should be zero.
If the system has no solutions
for all $(k, \ell) \in \{(i, i + 1) \,|\, i = 1, \ldots, n - 1\}$, then we show that the SDP relaxation is exact. 
More precisely, 
our condition on the exactness is that the system \eqref{eq:system} is not feasible for all $(k, \ell) \in \{(i, i + 1) \,|\, i = 1, \ldots, n - 1\}$.
Since the number of elements in $\{(i, i + 1) \,|\, i = 1, \ldots, n - 1\}$ is $n - 1$,
we can determine whether the SDP relaxation of a given tridiagonal QCQP is exact
by considering at most $n - 1$ systems.
The exactness of the SDP relaxation for other forest-structured QCQPs can be discussed similarly.

To prove the exactness  of the SDP relaxation with our sufficient conditions for a forest-structured QCQP,
the rank property on the aggregated sparsity matrix in \autoref{lem:lowerbound_rank_tridiagonal}
and the perturbation technique in \autoref{lem:perturbation_exactness} are utilized.
We need to estimate a lower bound for the rank of the dual SDP as in Burer and Ye's work \cite{Burer2019}.
The main idea there  
was to estimate  the upper bound for the rank of a SDP solution $(x^*, X^*)$ by the lower bound for the rank for a dual solution $y^*$ of the dual SDP.
The strong duality of SDP and the Sylvester's inequality on the rank were used in \cite{Burer2019}.
In this work, a lower bound for the rank  is estimated using 
the lower bound for the rank $n - 1$ for a forest-structured QCQP with $n$ variables and nonzero off-diagonal elements. 

We also investigate the exactness of the SDP relaxation for non-tridiagonal QCQPs
by applying our results on tridiagonal QCQPs. For instance, most GTRS's are not tridiagonal QCQPs. However,
the exactness of the SDP relaxation for the GTRS was  known by the S-lemma 
\cite{Polik2007} under the Slater's conditions.
We demonstrate that the  GTRS has the exact SDP relaxation without replying on the S-lemma.
More precisely,
we show the exactness of the SDP relaxation for the GTRS by  our result on tridiagonal QCQPs.
To apply the results on tridiagonal QCQPs,
the GTRS should be transformed to the tridiagonal QCQP.
We improve the simultaneous tridiagonalization technique proposed in~\cite{Sidje2011}
so that all tridiagonal QCQPs constructed from the GTRS
always satisfy the conditions for the exact SDP relaxation.
Similarly, the exactness of other classes of QCQPs can be analyzed by the method presented in this paper.

The rest of this paper is organized as follows.
In section 2, we review   related works on the exact SDP relaxation of QCQPs,
and present some background materials  for the subsequent discussion.
Some basic properties on tridiagonal matrices are also summarized
to discuss on tridiagonal QCQPs.
Sections 3 and 4 include the main results of this paper.
In section 3,
the main results for forest-structured QCQPs are described,
and  sufficient conditions for the exactness are also proposed. 
A perturbation technique used in the proofs for these conditions is also shown.
In section 4, we first describe the simultaneous tridiagonalization technique. Then,
the conditions presented in section 3 are applied to non-tridiagonal QCQPs.
We also present an alternative proof on the exactness of Generalized Trust-Region Subproblem 
in  section 4. 
Finally, we conclude in section 5.


\section{Preliminaries}
We start by introducing notation and symbols used in this paper.

\subsection{Notation and symbols} 
\begin{itemize}
    \item
        $\bR^n$ and $\bS^n$  denote
        the $n$-dimensional Euclidean space,
        the space of $n \times n$ symmetric matrices, respectively.
        The notation $M \succeq O$ and $M \succ O$ mean that the matrix $M$ is
        positive semidefinite and positive definite, respectively.
    \item
        $0_n \in \bR^n$ denotes the zero vector of length $n$.
        $I_n \in \bR^{n \times n}$ denotes the  $n \times n$ identity matrix.
    \item
        For $M, N \in \bS^n$,
        $\innerp{M}{N}$ denotes the Frobenius inner product of $M$ and $ N$,
        i.e., $\innerp{M}{N} \coloneqq \trace(\trans{M}N)= \sum_{i, j} M_{ij} N_{ij}$.
    \item
        For $M \in \bS^n$,
        $\|M\|_\mathrm{max}$ denotes the maximum norm of $M$, i.e.,
        $\|M\|_\mathrm{max} =  \max_{1 \leq i, j \leq n} |M_{ij}|$.
    \item
        $[n]$ is a shorthand notation for
        $\left\{i \in \bN \,\middle|\, 1 \leq i \leq n\right\}$.
    \item
        $\submatrix{M}{I}{J}$ denotes the submatrix of $M$
        constructed by collecting the rows of $M$ indexed by $I \subset [n]$ and the columns of $M$ indexed by $J \subset [n]$.
        We use $\submatrix{M}{I}{}$ for $\submatrix{M}{I}{I}$.
\end{itemize}

Let $M = [m_{ij}] \in \mathbb{S}^n$ for $1 \leq i,j \leq n$.
We use $[m_{ij}]$ to denote a matrix $M \in \SMAT^n$ whose $(i,j)$-th element is
$m_{ij}$ and also use $[Q^0]_{ij}$ to denote the $(i,j)$-th element of a matrix $Q^0 \in\SMAT^n$.

\begin{define}
    A finite set $\{a_1, \ldots, a_n\} \subset \bR$
    is called sign-definite with respect to $\bR$
    if its members are either all nonnegative or all nonpositive,
    i.e., $a_ia_j \geq 0$ for any $i, j \in [n]$.
\end{define}
\noindent
For example,
the set $\{0, 100, 0, 2\}$ is sign-definite
while sets $\{0, 100, 0, -2\}$ and $\{-1, 1\}$ are not sign-definite.

\subsection{Tridiagonal matrices} 
A matrix $M = [m_{ij}] \in \bS^n$ is called tridiagonal
if all elements $m_{ij}$ are zero for $i, j \in [n]$ satisfying $|i - j| \geq 2$.
%
%
We use $\idiagonal{n}$ and $\ioffdiagonal{n}$ to represent the
index sets for the main diagonal and  off-diagonal elements  
of $n \times n$ matrices, respectively,
i.e.,
\begin{align*}
\idiagonal{n} \coloneqq& \{(i, i) \,|\, i = 1, \ldots, n\}, \\
\ioffdiagonal{n} \coloneqq& \{(i, i + 1) \,|\, i = 1, \ldots, n - 1\}.
\end{align*}
Since $M \in \bS^n$,
    $\ioffdiagonal{n}$ contains only the indices for the upper triangular elements.

We discuss a method to estimate a lower bound on the rank of tridiagonal matrices.
For general matrices, this estimation is generally hard.
In the case of a diagonal matrix, 
we know that its rank equals
the number of nonzero elements on its main diagonal.
For a tridiagonal matrix, 
we can show that its rank can be bounded from below by the number of off-diagonal nonzero elements.
The following lemma is immediately obtained from the result of \cite{JOHNSON99}.

\begin{lemma} \label{lem:lowerbound_rank_tridiagonal}
    Let $M \in \mathbb{S}^n$ be a tridiagonal matrix.
    If all the superdiagonal elements of $M$ are nonzeros,
    then $\rank{M} \geq n - 1$.
\end{lemma}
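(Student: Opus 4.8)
The plan is to exhibit an $(n-1)\times(n-1)$ nonsingular submatrix of $M$, which immediately forces $\rank M \geq n-1$. The natural candidate is the submatrix obtained by deleting the first row and the last column, namely $\submatrix{M}{I}{J}$ with $I = \{2,\ldots,n\}$ and $J = \{1,\ldots,n-1\}$. First I would re-index, so that the $(k,\ell)$-entry of this submatrix equals $m_{k+1,\ell}$ for $k,\ell \in [n-1]$.

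Next I would read off the structure forced by tridiagonality. Since $m_{ij}=0$ whenever $|i-j|\geq 2$, the entry $m_{k+1,\ell}$ vanishes as soon as $\ell \leq k-1$ (because then $|(k+1)-\ell|\geq 2$), so $\submatrix{M}{I}{J}$ is upper triangular. Its diagonal entries are $m_{k+1,k}=m_{k,k+1}$ for $k=1,\ldots,n-1$, which are exactly the superdiagonal elements of $M$ and hence nonzero by hypothesis. Therefore
\begin{equation*}
\det \submatrix{M}{I}{J} = \prod_{k=1}^{n-1} m_{k,k+1} \neq 0,
\end{equation*}
so this submatrix has full rank $n-1$, and consequently $\rank M \geq n-1$.

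There is little genuine obstacle here; the only point requiring care is the choice of which row and column to delete. Deleting the \emph{first} row and the \emph{last} column (rather than a symmetric pair of indices) is precisely what lines the superdiagonal entries up onto the diagonal of the submatrix, and I would want to check explicitly that tridiagonality renders the submatrix triangular. It is worth noting that the diagonal entries $m_{ii}$ of $M$ play no role, consistent with the statement depending only on the superdiagonal. As an alternative I could argue directly on the kernel: writing $Mv=0$ out row by row and using $m_{k,k+1}\neq 0$ to solve successively for $v_2,v_3,\ldots,v_n$ in terms of $v_1$ shows $\dim\ker M \leq 1$, giving the same bound. I would keep the submatrix argument as the primary one, since it is the most direct and matches the determinantal flavor of the cited result \cite{JOHNSON99}.
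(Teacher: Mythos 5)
Your proof is correct and rests on essentially the same device as the paper's: the paper forms exactly the same submatrix (deleting the first row and the $n$th column), only applied to $M-\lambda I$ for each eigenvalue $\lambda$, and observes that it is triangular with the nonzero superdiagonal elements of $M$ on its diagonal. Your argument is the direct $\lambda=0$ specialization of this, which already gives $\rank M \geq n-1$.
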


For symmetric positive semidefinite matrices $\bS^n \ni M \succeq O$, it is difficult to determine whether the positive semidefiniteness
is maintained after replacing some of off-diagonal elements with zeros.
However, in the case of positive semidefinite tridiagonal matrices, 
we show in the following lemma
that  they remain to be positive semidefinite even if some of  off-diagonal elements are replaced by zeros.

\begin{lemma} \label{lem:tridiagonal_keeps_psd}
    Let $M = [m_{ij}] \in \bS^n$ be a positive semidefinite and tridiagonal  
    matrix.
    For a subset $E \subset \ioffdiagonal{n}$,
    let $L = [\ell_{ij}] \in \bS^n$ be the tridiagonal matrix
    constructed by replacing the matrix elements of $M$ indexed by $E$ with zeros,
    i.e.,
    \begin{equation*}
        \ell_{ij} := \begin{cases}
            0      & \text{if $(i, j) \in E$ or $(j, i) \in E$}, \\
            m_{ij} & \text{otherwise.}
        \end{cases}
    \end{equation*}
    Then, $L \succeq O$.
\end{lemma}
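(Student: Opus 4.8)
The plan is to exploit the fact that zeroing out off-diagonal entries of a tridiagonal matrix severs it into a \emph{block-diagonal} matrix whose diagonal blocks are contiguous principal submatrices of the original. Set $S := \{i \in [n-1] : (i, i+1) \in E\}$, the set of \emph{break points}. These break points partition $[n]$ into consecutive index intervals $I_1, \ldots, I_r$ (with $r = |S| + 1$), each of the form $\{a, a+1, \ldots, b\}$. Because $L$ is tridiagonal, its only entries that could link two distinct intervals are the superdiagonal entries $\ell_{k, k+1}$ with $k \in S$ (and their symmetric counterparts), and these are all zero by construction; hence, in the natural index order, $L$ is block diagonal with diagonal blocks $\submatrix{L}{I_1}{}, \ldots, \submatrix{L}{I_r}{}$.

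The crucial step is to identify each block with a principal submatrix of $M$. Fix an interval $I_t = \{a, \ldots, b\}$. None of its interior off-diagonal positions $(a, a+1), \ldots, (b-1, b)$ is a break point --- otherwise $I_t$ would have been split further --- so none lies in $E$, and therefore $\ell_{i, i+1} = m_{i, i+1}$ for every such $i$. The diagonal entries are never altered, so $\ell_{ii} = m_{ii}$ as well. Consequently $\submatrix{L}{I_t}{} = \submatrix{M}{I_t}{}$ for each $t$: every diagonal block of $L$ is exactly the principal submatrix of $M$ indexed by $I_t$.

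To conclude, I would invoke two standard facts. First, every principal submatrix of a positive semidefinite matrix is again positive semidefinite, so $\submatrix{M}{I_t}{} \succeq O$ for each $t$ because $M \succeq O$. Second, a symmetric block-diagonal matrix is positive semidefinite precisely when all of its diagonal blocks are, since its spectrum is the union of the spectra of the blocks. Combining these yields $L \succeq O$.

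I do not expect a genuine obstacle: the argument is entirely structural and uses neither the rank estimate of \autoref{lem:lowerbound_rank_tridiagonal} nor any eigenvalue computation. The only point that merits care is the bookkeeping of the second paragraph, namely that cutting the path exactly at the positions in $E$ leaves every \emph{intra}-block off-diagonal of $L$ equal to the corresponding entry of $M$, so that the blocks coincide with principal submatrices of the \emph{original} $M$ rather than merely of the already-modified $L$. An essentially equivalent alternative is induction on $|E|$: zeroing a single entry $m_{k,k+1}$ of a positive semidefinite tridiagonal matrix splits it into the two principal submatrices on $\{1, \ldots, k\}$ and $\{k+1, \ldots, n\}$, each positive semidefinite, and iterating preserves positive semidefiniteness at every step. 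The direct block-decomposition argument is cleaner, so that is the one I would write out.
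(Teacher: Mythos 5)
Your proof is correct, and it takes a genuinely different route from the paper's. The paper argues by induction on $|E|$: for the base case $|E|=1$ it verifies that \emph{every} principal minor of $L$ is nonnegative, observing that any principal submatrix $\submatrix{L}{I}{}$ containing both endpoints of the zeroed entry is block diagonal, so its determinant factors as a product of two principal minors of $M$; positive semidefiniteness then follows from the characterization of $\bS^n \ni L \succeq O$ by nonnegativity of all principal minors, and the inductive step removes one entry of $E$ at a time. Your argument dispenses with both the induction and the minor criterion: you cut the index set $[n]$ at the break points of $E$ into consecutive intervals $I_1,\ldots,I_r$, note that tridiagonality forces $L$ to be block diagonal with $\submatrix{L}{I_t}{} = \submatrix{M}{I_t}{}$ (your bookkeeping that intra-block superdiagonal entries are untouched is exactly the right point to check), and conclude from the two elementary facts that principal submatrices of a PSD matrix are PSD and that a block-diagonal matrix with PSD blocks is PSD. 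This is cleaner and more self-contained for the tridiagonal case; the alternative induction you sketch at the end is essentially the paper's proof. One thing the paper's formulation buys is a smoother generalization: in \autoref{lem:treestructured_keeps_psd} the same minor-factorization argument is reused verbatim for forest-structured matrices, where removing an edge splits a tree into two components that are \emph{not} consecutive index intervals in general --- there the paper has to assume a relabeling so that the components become $[\ell]$ and $[n]\setminus[\ell]$. Your interval decomposition exploits the path structure of tridiagonal matrices, where contiguity of the components is automatic; to extend it to forests you would need the same permutation step.
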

\begin{proof} 
    We use  induction on the size of the set $E$.
    Let us first consider the case $|E| = 1$.
    Then, $E$ has one element $(i, i + 1) \in E$ for some $i$.
    For any $I \subset [n]$, we have the following  two cases: 
    \begin{enumerate}[label={(\alph*)}]
        \item
            If $i \not\in I$ or $i + 1 \not\in I$,
            the principal $\submatrix{L}{I}{}$ does not have $(i, i+1)$-th and $(i+1, i)$-th elements of $M$,
            therefore
            \begin{equation*}
                \mathrm{det}\left(\submatrix{L}{I}{}\right) = \mathrm{det}(\submatrix{M}{I}{}) \geq 0.
            \end{equation*}
        \item
            If $i \in I$ and $i + 1 \in I$,
            the principal $\submatrix{L}{I}{}$ has $(i, i+1)$-th and $(i+1, i)$-th elements of $M$
            replaced by zeros.
            Since the submatrix $\submatrix{L}{I}{}$ is a block diagonal matrix with two blocks,
            \begin{align*}
                \mathrm{det}\left(\submatrix{L}{I}{}\right)
        &= \mathrm{det}\left(L_{I \cap \{1, \ldots, i\}}\right)
        \mathrm{det}\left(L_{I \cap \{i+1, \ldots, n\}}\right) \\
        &= \mathrm{det}\left(M_{I \cap \{1, \ldots, i\}}\right)
        \mathrm{det}\left(M_{I \cap \{i+1, \ldots, n\}}\right) \\
        &\geq 0,
            \end{align*}
            where the last inequality follows from the fact that
            all the principal minors of $M$ are nonnegative.
    \end{enumerate}
    Since all the principal minors of $L$ are nonnegative,
    $L \succeq O$ follows.

    Suppose the result is true for $|E| = k - 1$,
    and consider the case  $|E| = k$.
    The set $E$ can be divided into two sets:
    $F \coloneqq \{(j, j + 1)\}$ and $E \setminus F$.
    Let $N \in \bS^n$ be a tridiagonal matrix
    constructed by replacing the $(j,j+1)$ and $(j+1,j)$ elements of $M$ with zeros.
    Then, from the  case mentioned above, $N \succeq O$ holds.
    Since $\left|E \setminus F\right| = k - 1$,
    by the induction hypothesis,
    we have $L \succeq O$.
\end{proof} 

In our proof of \autoref{lem:tridiagonal_keeps_psd},
 the nonnegativeness of all the principal minors of $L$
was shown based on the fact that
they  are given by the product of at most two principal minors of $M$.
The nonnegativeness can be also shown
by  a representation of the determinant of the tridiagonal matrix.
More details can be found in Corollary 2.2 of \cite{El-Mikkawy2003}.

\subsection{Aggregate sparsity matrix and forest-structured QCQPs} \label{ssec:aggregate_sparsity} 

In \autoref{lem:lowerbound_rank_tridiagonal},
the rank of tridiagonal matrices is discussed without taking the
positive semidefiniteness into account.
As the rank of positive semidefinite matrices plays a crucial role
to extend the result in \autoref{lem:lowerbound_rank_tridiagonal}
to more general matrices,
we briefly introduce the aggregate sparsity matrix
and discuss the relation between the aggregate sparsity matrix and forest-structured QCQPs.

To construct the aggregate sparsity matrix from
the SDP relaxation \eqref{eq:sdr}, we define 
an aggregate sparsity graph $G(\VC,\EC)$ as a graph
with the set of vertices $\VC = [n]$ 
and  the set of edges 
\begin{equation} \label{eq:aggregate_sparsity_pattern}
    \EC = \left\{(i,j)  \in \VC \times \VC \ \middle| \ 
    \ [Q^p]_{ij} \neq 0 \ \text{for some} \ p \in \{0\} \cup [m]\right\}.
\end{equation}
The sparsity encoded in $\EC$ is called the aggregate sparsity pattern.
The aggregate sparsity matrix $R \in \SMAT^n$ corresponding to $G(\VC,\EC)$ is defined as 
\begin{equation*}
    R_{ij} = \left\{ \begin{array}{cl} *  & \mbox{if } (i,j) \in \EC, \\
    0 & \mbox{otherwise}, \end{array} \right.
\end{equation*}
where $*$ is an arbitrary nonzero real number.

A  graph is called a forest if it has no cycles in the upper triangular part $\EC  \cap \left\{(i,j) \in \VC \times \VC \ \middle| \ i < j \right\}$.
A connected forest is called a tree.
We also call QCQPs \eqref{eq:qcqp} as forest-structured QCQPs
if their aggregated sparsity graphs $G(\VC,\EC)$ are forests.
Thus,
the graph $G(\VC,\EC)$ obtained from forest-structured QCQPs
consists of one or more trees.
Obviously, the tridiagonal QCQP is a subclass of  forest-structured QCQP.

For a given symmetric matrix $M = [m_{ij}] \in \bS^n$,
a sparsity structure graph $G(\VC,\EC)$ can also be defined 
as an  graph with the set of vertices $\VC = [n]$ and the set of edges
\[
    \EC = \left\{(i,j)  \in \VC \times \VC \ \middle| \ m_{ij} \neq 0\right\}.
\]
We call symmetric matrices $M \in \bS^n$ as forest-structured matrices
if their sparsity structure graphs have no cycles.
For $G(\VC,\EC)$ corresponding to a QCQP,
the sparsity structure graph of  $Q^p$ is a subgraph of $G(\VC,\EC)$ for any $p \in \{0\} \cup [m]$.
As a result,
all the matrices $Q^p\ (p \in \{0\} \cup [m])$ in forest-structured QCQPs must be forest-structured matrices.

We consider the following index sets in the subsequent discussion for the forest-structured matrices and QCQPs:
\begin{align*}
    \idiagonal{n} \coloneqq& \{(i, i ) \,|\, i = 1, \ldots, n \} \\
    \ioffdiagonal{n} \coloneqq& \{(i, j) \in \EC \,|\, i < j\},
\end{align*}
where $(i,j)$ is the index for the nonzero element of the aggregate sparsity matrix.
By an appropriate permutation on $[n]$, we can assume that 
$n \in \VC$ is the root of a tree graph.
If $j$  corresponds to the parent node of $i$ in a tree graph,
then it is uniquely determined for each $i \in \VC \backslash \{n\}$.

As an immediate consequence of Corollary 3.9 of \cite{JOHNSON03},
we can establish  that
any symmetric positive semidefinite matrix whose graph is a tree has rank at least $n-1$, as described in the following lemma.
We will use this fact to estimate the rank of SDP solutions
of  SDP relaxations in \autoref{sec:exactness_of_triqcqp}.
\begin{lemma} \label{lem:lowerbound_rank_treestructured} \cite[Corollary 3.9]{JOHNSON03}
    Let $M \in \bS^n$ be a positive semidefinite and forest-structured matrix.
    If the sparsity structure graph of $M$ is connected 
    and all the off-diagonal elements in $\ioffdiagonal{n}$ for $M$ are nonzeros,
    then $\rank{M} \geq n - 1$.
\end{lemma}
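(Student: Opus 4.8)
The plan is to prove the bound by induction on $n$, peeling off one leaf of the tree at each step via a Schur complement, rather than adapting the triangular-submatrix argument used for \autoref{lem:lowerbound_rank_tridiagonal}. That trick does not extend verbatim: in a path each non-root vertex is the parent of exactly one vertex, which produced a bijection between columns and parent-rows and hence a triangular submatrix, whereas a general tree vertex may have several children. Moreover, positive semidefiniteness is genuinely needed here (it plays no role in \autoref{lem:lowerbound_rank_tridiagonal}), so the argument must invoke $M \succeq O$ explicitly; it will enter precisely at the pivoting step.

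First I would record the structural fact that every diagonal entry of $M$ is positive. Since the graph is connected and $n \geq 2$, each vertex $i$ is incident to some edge of the sparsity graph, whose corresponding off-diagonal entry $m_{ij}$ is nonzero by hypothesis. If $m_{ii}$ were $0$, positive semidefiniteness of the principal submatrix on $\{i,j\}$ would force $-m_{ij}^2 = \det \submatrix{M}{\{i,j\}}{} \geq 0$, a contradiction; hence $m_{ii} > 0$ for all $i$. In particular, choosing any leaf $v$ of the tree (which exists for $n \geq 2$) with unique neighbour $u$, the pivot $m_{vv}$ is strictly positive.

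Next I would eliminate $v$. Let $M' \in \bS^{n-1}$ be the Schur complement of the $1\times 1$ block $m_{vv}$ in $M$, that is, the matrix on $[n]\setminus\{v\}$ obtained from $\submatrix{M}{[n]\setminus\{v\}}{}$ by subtracting the rank-one term $m_{vv}^{-1}\, M_{[n]\setminus\{v\},\,v}\, M_{v,\,[n]\setminus\{v\}}$. Because the column of $M$ at $v$ has its only off-diagonal nonzero at row $u$, this correction alters only the single diagonal entry at $u$, leaving every off-diagonal entry of $\submatrix{M}{[n]\setminus\{v\}}{}$ unchanged. Consequently $M'$ is positive semidefinite (a Schur complement of a PSD matrix with respect to a positive block), its sparsity graph is $T-v$, which is again a connected tree on $n-1$ vertices since $v$ is a leaf, and all its off-diagonal entries remain nonzero. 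Thus $M'$ satisfies the hypotheses of the lemma, and by the induction hypothesis $\rank M' \geq (n-1)-1 = n-2$.

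Finally I would combine the pieces through the rank additivity of the Schur complement: for a symmetric matrix with an invertible (here $1\times 1$, positive) leading block after a symmetric permutation bringing $v$ first, the rank equals the rank of that block plus the rank of its Schur complement, so $\rank M = 1 + \rank M' \geq n-1$, completing the induction from the trivial base case $n=1$. The only genuinely delicate points are guaranteeing a nonzero pivot (supplied by $M \succeq O$, and where the necessity of positive semidefiniteness is visible) and verifying that leaf removal preserves all three hypotheses, namely being a tree, connectedness, and nonzero edge entries; the remaining steps are standard facts about Schur complements.
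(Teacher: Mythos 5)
Your proof is correct, but it is worth noting that the paper does not actually prove this lemma: it is quoted directly as Corollary~3.9 of Johnson et al.~\cite{JOHNSON03}, a result from the eigenvalue-multiplicity theory of matrices whose graph is a tree (extreme eigenvalues of such matrices are simple, so a positive semidefinite one has nullity at most $1$). Your leaf-peeling induction is therefore a genuinely different, self-contained route. All the delicate points are handled correctly: the pivot $m_{vv} > 0$ is extracted from positive semidefiniteness via the $2 \times 2$ principal minor on a leaf edge --- and this is exactly where PSD-ness is indispensable, since the statement fails for general symmetric matrices with tree graphs (e.g., the star $K_{1,3}$ with zero diagonal and unit edge entries has rank $2 < n-1 = 3$), in contrast to the path case of \autoref{lem:lowerbound_rank_tridiagonal}, which needs no definiteness; the rank-one correction $m_{vv}^{-1} M_{[n]\setminus\{v\},v} M_{v,[n]\setminus\{v\}}$ indeed touches only the diagonal entry $(u,u)$ because a leaf column has a single off-diagonal nonzero, so the sparsity graph of $M'$ is exactly $T - v$, again a tree with nonzero edge entries and $M' \succeq O$; and the Guttman rank additivity $\rank M = 1 + \rank M'$ applies since the $1 \times 1$ pivot block is nonsingular. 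As for what each approach buys: the paper's citation gets the bound instantly and connects it to the broader multiplicity theory of trees (which the paper leans on again when invoking \cite{JOHNSON06} to note the bound fails for non-trees), whereas your argument uses nothing beyond Schur complements, makes visible precisely where connectivity, acyclicity, nonzero edge entries, and $M \succeq O$ each enter, and extends immediately to give $\rank M \geq n - c$ for a forest with $c$ connected components by arguing componentwise.
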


As an example of forest-structured QCQPs, we consider 
an arrow-type matrix of form: 
\begin{equation*}  \label{ARROW}
    V = \left(\begin{array}{ccccc}
            v_1 &  &  &  & w_1 \\
                & v_2 & & & w_2 \\
                & & \ddots & & \vdots \\
                & & & v_{n-1} & w_{n-1} \\
            w_1 & w_2 & \cdots & w_{n-1} & v_n 
    \end{array}\right).
\end{equation*}
For the arrow-type matrix,
the index sets $\idiagonal{n}$ and $\ioffdiagonal{n}$ are given as
\begin{align*}
    \idiagonal{n} \coloneqq& \{(i, i) \,|\, i = 1, \ldots, n\} \\
    \ioffdiagonal{n} \coloneqq& \{(i, n ) \,|\, i = 1, \ldots, n - 1\}.
\end{align*}
By \autoref{lem:lowerbound_rank_treestructured},
if $V \succeq O$  
and $w_1,\ldots,w_{n-1}$ are nonzeros, then the rank of $V$ is at least $n-1$.

\autoref{lem:tridiagonal_keeps_psd} can also be extended to forest-structured matrices.
\begin{lemma} \label{lem:treestructured_keeps_psd}
    Let $M = [m_{ij}] \in \bS^n$ be a positive semidefinite and forest-structured 
    matrix.
    For a subset $E \subset \ioffdiagonal{n}$,
    let $L = [\ell_{ij}] \in \bS^n$ be the forest-structured matrix
    constructed by replacing the matrix elements of $M$ indexed by $E$ with zero,
    i.e.,
    \begin{equation*}
        \ell_{ij} := \begin{cases}
            0      & \text{if $(i, j) \in E$ or $(j, i) \in E$}, \\
            m_{ij} & \text{otherwise.}
        \end{cases}
    \end{equation*}
    Then, $L \succeq O$.
\end{lemma}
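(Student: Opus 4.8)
The plan is to mirror the induction on $|E|$ used in the proof of \autoref{lem:tridiagonal_keeps_psd}, replacing the explicit path splitting of the tridiagonal case by the bridge property of forest edges. Throughout I use the characterization that a symmetric matrix is positive semidefinite if and only if all of its principal minors are nonnegative, so it suffices to show $\det(\submatrix{L}{I}{}) \geq 0$ for every $I \subset [n]$.

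For the base case $|E| = 1$, say $E = \{(i,j)\}$ with $(i,j)$ an edge of the forest underlying $M$, I fix $I \subset [n]$. If $i \notin I$ or $j \notin I$, then zeroing the $(i,j)$ and $(j,i)$ entries does not touch $\submatrix{L}{I}{}$, so $\submatrix{L}{I}{} = \submatrix{M}{I}{}$ and $\det(\submatrix{L}{I}{}) = \det(\submatrix{M}{I}{}) \geq 0$. If instead $i, j \in I$, I would exploit that in a forest every edge is a bridge. Let $C_j$ be the set of vertices reachable from $j$ once the edge $(i,j)$ is deleted, and set $C_i = [n] \setminus C_j$. Because $(i,j)$ is the unique edge joining $C_i$ and $C_j$ in the sparsity graph of $M$, after deleting it no nonzero entry of $L$ connects $C_i$ to $C_j$; hence, reordering the rows and columns so that the indices in $I \cap C_i$ precede those in $I \cap C_j$, the submatrix $\submatrix{L}{I}{}$ becomes block diagonal. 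Neither diagonal block is affected by the deletion (the block on $I \cap C_i$ contains $i$ but not $j$, and symmetrically for the other), so each equals the corresponding principal submatrix of $M$, giving
\[
\det(\submatrix{L}{I}{}) = \det(\submatrix{M}{I \cap C_i}{})\,\det(\submatrix{M}{I \cap C_j}{}) \geq 0,
\]
since both factors are principal minors of the positive semidefinite matrix $M$.

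For the inductive step I would argue exactly as in \autoref{lem:tridiagonal_keeps_psd}. Assuming the claim for $|E| = k - 1$, write $E = F \cup (E \setminus F)$ with $F = \{(i,j)\}$ a single edge, and let $N$ be obtained from $M$ by zeroing only the entries indexed by $F$. The base case gives $N \succeq O$, and $N$ is again forest-structured (deleting an edge from a forest leaves a forest), with $E \setminus F$ still a subset of its off-diagonal support of size $k - 1$. Applying the induction hypothesis to $N$ and $E \setminus F$ then yields $L \succeq O$.

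The only genuinely new point relative to the tridiagonal case, and hence the step I expect to require the most care, is the block-diagonal factorization in the base case: one must verify that deleting a single tree edge $(i,j)$ really does sever the induced principal submatrix on any $I$ containing both $i$ and $j$ into two non-interacting blocks. This is precisely where the forest hypothesis is essential—it guarantees that $(i,j)$ is a cut edge and that no other edge of $M$ crosses the partition $(C_i, C_j)$—whereas the remaining determinant bookkeeping is identical to the tridiagonal argument.
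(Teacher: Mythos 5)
Your proof is correct and follows essentially the same route as the paper's: induction on $|E|$, with the base case $|E|=1$ handled by checking all principal minors and using the fact that deleting the edge $(i,j)$ disconnects the sparsity graph into two parts, so that $\submatrix{L}{I}{}$ (after a permutation) is block diagonal with blocks that are principal submatrices of $M$. Your write-up is in fact slightly more explicit than the paper's on two points it glosses over---that no other edge crosses the cut $(C_i, C_j)$ because a forest edge is a bridge, and that the matrix $N$ in the inductive step is again forest-structured with $E \setminus F$ inside its off-diagonal support---but the underlying argument is identical.
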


\begin{proof} 
    It suffices to consider the case $|E| = 1$
    since  similar arguments to the proof of \autoref{lem:tridiagonal_keeps_psd} can be applied.
    In this case, there exists only one element $(i,j) $ in  $E$.  
        By removing the edge $(i, j)$ from $G(\VC,\EC)$ of the forest-structured matrix $M$,
    a tree in $G(\VC,\EC)$ is divided into two trees:
    one with the node $i$, and the other with the node $j$.
    The set $\VC$ is also separated  into two sets: 
    $W_1 \subset \VC$, the set of nodes in the 
        component including the node $i$
    in the  graph $G(\VC,\EC \setminus \{(i,j), (j,i)\})$, and
    $W_2 \subset \VC$, the set of  other nodes.
    Without loss of generality,
    we may assume that the indices in $W_1$ and $W_2$ are consecutive integers,
    i.e., there exists a positive number $\ell$ such that $W_1 = [\ell]$ and $W_2 = [n] \setminus [\ell]$.
    For any $I \subset [n]$, we have the following two cases:
    \begin{enumerate}[label={(\alph*)}]
        \item
            If $i \not\in I$ or $j \not\in I$,
            the principal $\submatrix{L}{I}{}$ does not include $(i, j)$-th and $(j, i)$-th elements of $M$,
            then
            \begin{equation*}
                \mathrm{det}\left(\submatrix{L}{I}{}\right) = \mathrm{det}(\submatrix{M}{I}{}) \geq 0.
            \end{equation*}
        \item
            If $i \in I$ and $j \in I$,
            the principal $\submatrix{L}{I}{}$ includes $(i, j)$-th and $(j, i)$-th elements of $M$,
            and their values are zeros.
            Since the submatrix $\submatrix{L}{I}{}$ is a block diagonal matrix with two blocks,
            we have that
            \begin{align*}
                \mathrm{det}\left(\submatrix{L}{I}{}\right)
                &= \mathrm{det}\left(L_{I \cap W_1}\right)
                \mathrm{det}\left(L_{I \cap W_2}\right) \\
                &= \mathrm{det}\left(M_{I \cap W_1}\right)
                \mathrm{det}\left(M_{I \cap W_2}\right) \\
                &\geq 0,
            \end{align*}
            where the last inequality follows from the fact that
            all the principal minors of $M$ are nonnegative.
    \end{enumerate}
    Since all the principal minors of $L$ are nonnegative,
    we have $L \succeq O$.
\end{proof} 

\section{Exactness conditions for forest-structured QCQPs}
\label{sec:exactness_of_triqcqp}

To state our main results on the exact SDP relaxation of forest-structured QCQPs,
some assumptions are necessary. Also, a perturbed QCQP is introduced by
slightly varying the elements of the data matrix of the objective function.
We then present preliminary results on the perturbed QCQP.
By analyzing the conditions under which the SDP relaxation has a rank-1 solution,
we discuss the exact SDP relaxation for forest-structured QCQPs.

We assume the following  for a given QCQP and its SDP relaxation:
\begin{assumption} \label{asm:assumptions} \quad\par
    \begin{enumerate}[label={(\alph*)}]
        \item \label{asm:assumption_1}
            There exists a feasible point for \eqref{eq:qcqp}.
        \item \label{asm:assumption_2}
            There exists $\bar{y} \geq 0$ satisfying $\sum_{p = 1}^m \bar{y}_pQ^p \succ O$.
        \item \label{asm:assumption_3}
            There exists an interior feasible points for \eqref{eq:sdr}.
    \end{enumerate}
\end{assumption}
\noindent
We note that these assumptions were also used for the diagonal QCQPs to establish the results on the exact SDP relaxations in~\cite{Burer2019}.
  \autoref{asm:assumptions} \ref{asm:assumption_2} 
can be also represented as $\bar{y} \leq 0$ and $\sum_{p = 1}^m \bar{y}_pQ^p \prec O$. 
By \autoref{asm:assumptions} \ref{asm:assumption_1} and \ref{asm:assumption_2}, we see that
the feasible sets of \eqref{eq:qcqp} and \eqref{eq:sdr} are bounded,
and a solution to \eqref{eq:qcqp} exists.
In fact,  multiplying $\trans{x}Q^px + 2\trans{q_p}x \leq b_p$ by $\bar{y}_p$,
and adding these for $p$, we have
\begin{equation*}
    \trans{x}\left(\sum_{p = 1}^m \bar{y}_pQ^p\right)x
    + 2\trans{\left(\sum_{p = 1}^m \bar{y}_pq_p\right)}x \leq \trans{b}\bar{y}.
\end{equation*}
Thus, all the feasible points of \eqref{eq:qcqp}  are in the ellipsoid given by the above inequality.
It also implies that
all the feasible points of \eqref{eq:sdr} is bounded as:
\begin{equation*}
    \innerp{\left(\sum_{p = 1}^m \bar{y}_pQ^p\right)}{X}
    + 2\trans{\left(\sum_{p = 1}^m \bar{y}_pq_p\right)}x \leq \trans{b}\bar{y}.
\end{equation*}
By \autoref{asm:assumptions} \ref{asm:assumption_2} and \ref{asm:assumption_3},
the strong duality holds for the primal SDP \eqref{eq:sdr}.

The homogeneous form of \eqref{eq:qcqp} can be expressed as the following QCQP
with $n + 1$ variables and $m + 2$ inequality constraints:
\begin{mini*}
    {}{\trans{z}\bar{Q}^0z}{}{}
    \addConstraint{\trans{z}\bar{Q}^pz}{\leq b_p, \quad}{p = 1, \ldots, m}
    \addConstraint{\trans{z}E_{11}z}{\leq 1, \quad}{-\trans{z}E_{11}z \leq -1,}
\end{mini*}
with variable $z \in \bR^{n + 1}$ where
\begin{equation*}
    \bar{Q}^p \coloneqq
    \begin{bmatrix} 0 & \trans{q_p} \\ q_p & Q^p \end{bmatrix},
\end{equation*}
and $E_{ij}$ is an $n \times n$ matrix given by
\begin{equation*}
    [E_{ij}]_{qr} = \begin{cases}
        1 & \text{if $i = q$ and $j = r$,} \\
        0 & \text{otherwise.}
    \end{cases}
\end{equation*}
We note that the homogeneous form does not include the linear terms $\trans{q_p}x$ for all $p$,
and the objective function and the constraints are in quadratic forms in the variables.
By the last two inequalities,  $z_1 = 1$ or $-1$.
Although the homogeneous QCQP has a simpler form than the original \eqref{eq:qcqp},
any solution $z^*$ of the homogeneous QCQP recovers a solution
$x^* = \trans{[z^*_2 / z^*_1, \ldots, z^*_{n + 1} / z^*_1]}$ of \eqref{eq:qcqp}.
As a result,  we may assume,
without loss of generality,
the following condition:
\begin{assumption} \label{asm:assumptions2} \quad\par
    \begin{enumerate}[label={(\alph*)},start=4]
        \item \label{asm:assumption_4}
            $q_0, q_1, \ldots, q_m$ are zero vectors.
    \end{enumerate}
\end{assumption}

We now consider a homogeneous QCQP: 
\begin{mini}
    {}{\trans{x}Q^0x}{\label{eq:hqcqp}}{}
    \addConstraint{\trans{x}Q^px}{\leq b_p, \quad}{p = 1, \ldots, m.}
\end{mini}
\noindent
Then,  the SDP relaxation of  QCQP \eqref{eq:hqcqp} is described as
\begin{mini}
    {}{\innerp{Q^0}{X}}{\label{eq:hsdr}}{}
    \addConstraint{\innerp{Q^p}{X}}{\leq b_p, \quad}{p = 1, \ldots, m}
    \addConstraint{X}{\succeq O,}{}
\end{mini}
and its dual SDP problem is:
\begin{maxi}
    {}{-\trans{b}y}{\label{eq:hsdrd}}{}
    \addConstraint{S(y) := Q^0 + \sum_{p = 1}^m y_p Q^p}{\succeq O}
    \addConstraint{y}{\geq 0.}
\end{maxi}

\subsection{Perturbed QCQPs} 
To perturb the original QCQP \eqref{eq:hqcqp},
we let $P \in \bS^n$ be a nonzero matrix,
and let $\varepsilon > 0$ represent how much perturbation is added to the objective function.
When a perturbation $\varepsilon P$ is added to the objective function of \eqref{eq:hqcqp},
we have  an $\varepsilon$-perturbed QCQP:
\begin{mini}
    {}{\trans{x}\left[Q^0 + \varepsilon P\right]x}{\label{eq:perturbed_hqcqp}}{}
    \addConstraint{\trans{x}Q^px}{\leq b_p, \quad}{p = 1, \ldots, m.}
\end{mini}
The  SDP relaxation of \eqref{eq:perturbed_hqcqp}  can be written  as:
\begin{mini}
    {}{\innerp{\left[Q^0 + \varepsilon P\right]}{X}}{\label{eq:perturbed_sdr}}{}
    \addConstraint{\innerp{Q^p}{X}}{\leq b_p, \quad}{p = 1, \ldots, m}
    \addConstraint{X}{\succeq O.}{}
\end{mini}
 QCQP \eqref{eq:hqcqp} and
its $\varepsilon$-perturbed QCQP \eqref{eq:perturbed_hqcqp}
have the same feasible set since 
only the objective function 
is perturbed.
Similarly, their SDP relaxations have the same feasible set.
As  the feasible sets of QCQP \eqref{eq:hqcqp} and its SDP relaxation
are bounded by \autoref{asm:assumptions},
 the feasible sets for the perturbed problems \eqref{eq:perturbed_hqcqp} and \eqref{eq:perturbed_sdr} are also bounded.
The $\varepsilon$-perturbed problem  \eqref{eq:perturbed_hqcqp} will be used 
to check whether the SDP relaxation of \eqref{eq:hqcqp} is exact.

In the following lemma,  we show that the exactness of the SDP relaxation for the original problem \eqref{eq:hqcqp}  can be determined by
that of perturbed problems \eqref{eq:perturbed_hqcqp} and \eqref{eq:perturbed_sdr}.
%
\begin{lemma} \label{lem:perturbation_exactness}
    Let $P \neq O$ be an $n \times n$ nonzero matrix, and
    $\{\varepsilon_t\}_{t = 1}^\infty$ be a 
    sequence such that  $\lim_{t \to \infty} \varepsilon_t = 0$.
    If the SDP relaxation of the $\varepsilon_t$-perturbed problem \eqref{eq:perturbed_hqcqp}
    is exact for all $t = 1, 2, \ldots$,
    then the original problem \eqref{eq:hqcqp} also has an exact SDP relaxation.
\end{lemma}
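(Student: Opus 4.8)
The plan is to exploit the compactness of the common feasible region together with the continuity of the optimal value of the SDP relaxation as the perturbation vanishes. First I would record that, by \autoref{asm:assumptions} \ref{asm:assumption_1} and \ref{asm:assumption_2}, the feasible set of \eqref{eq:hsdr} is nonempty, closed and bounded, hence compact; call it $\mathcal{F}$. Since only the objective is perturbed, \eqref{eq:perturbed_sdr} has exactly this same feasible set $\mathcal{F}$ for every $\varepsilon$. Now, for each $t$, the hypothesis that the SDP relaxation of the $\varepsilon_t$-perturbed problem is exact supplies a rank-one optimal solution $X_t = x_t \trans{x_t} \in \mathcal{F}$ of \eqref{eq:perturbed_sdr}, with $x_t$ feasible (and optimal) for \eqref{eq:perturbed_hqcqp}.

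Next I would pass to the limit. Since every $X_t$ lies in the compact set $\mathcal{F}$, the sequence $\{x_t\}$ lies in the compact feasible set of the QCQP, so after extracting a subsequence (not relabeled) we may assume $x_t \to x^*$, whence $X_t \to X^* := x^* \trans{x^*}$, a matrix of rank at most one. Because $\mathcal{F}$ is closed and independent of $\varepsilon$, the limit $X^*$ is feasible for \eqref{eq:hsdr}.

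The key step is to show $X^*$ attains the optimal value of \eqref{eq:hsdr}. Writing $\theta(\varepsilon)$ for the optimal value of \eqref{eq:perturbed_sdr}, I would first establish the Lipschitz-type estimate $|\theta(\varepsilon) - \theta(0)| \leq |\varepsilon|\, M$, where $M := \max\{\, |\innerp{P}{X}| : X \in \mathcal{F} \,\}$ is finite by compactness; this follows by comparing each of the two optimal values against the other problem's optimizer used as a feasible test point. Then, using $\innerp{(Q^0 + \varepsilon_t P)}{X_t} = \theta(\varepsilon_t)$ by optimality, I take limits: the left-hand side tends to $\innerp{Q^0}{X^*}$ since $\varepsilon_t \to 0$ and $\innerp{P}{X_t}$ is bounded by $M$, while $\theta(\varepsilon_t) \to \theta(0)$ by the estimate. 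Hence $\innerp{Q^0}{X^*} = \theta(0)$, so the feasible matrix $X^*$ of rank at most one is optimal for \eqref{eq:hsdr}. This exhibits a rank-one optimal SDP solution, and the recovered $x^*$ is optimal for \eqref{eq:hqcqp}, so its SDP relaxation is exact.

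The main obstacle is ensuring that exactness at each $\varepsilon_t$ survives the passage to $\varepsilon = 0$: a priori both the optimal value and the rank-one structure could fail to be preserved in the limit. Both difficulties are resolved by compactness of $\mathcal{F}$, which simultaneously guarantees that the limit $X^*$ exists, remains feasible, and inherits rank at most one, and supplies the uniform bound $M$ that makes $\theta$ continuous at $0$. Without the boundedness furnished by \autoref{asm:assumptions} \ref{asm:assumption_1} and \ref{asm:assumption_2}, neither the subsequence extraction nor the continuity of the optimal value would be available.
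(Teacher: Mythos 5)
Your proposal is correct, and it follows the same overall skeleton as the paper's proof: both use Assumption 3.1 to get a compact common feasible region, take rank-one optimal solutions $X_t = x_t\trans{x_t}$ of the perturbed relaxations, pass to a limit point $X^* = x^*\trans{x^*}$, and then verify that this rank-(at most)-one limit is optimal for \eqref{eq:hsdr}. Where you genuinely diverge is in that last optimality step. The paper argues by contradiction: it supposes a strictly better feasible $X_\mathrm{opt}$ exists with gap $\nu > 0$, and then assembles four carefully calibrated inequalities (using $\varepsilon_t < \nu/(4n^2\mu\|P\|_{\max})$ and $\|x_t\trans{x_t} - X_{\lim}\|_{\max} < \nu/(2n^2\|Q^0\|_{\max})$ for large $t$) whose sum contradicts the optimality of $x_t\trans{x_t}$ in \eqref{eq:perturbed_sdr}. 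You instead prove the clean, reusable estimate $|\theta(\varepsilon) - \theta(0)| \leq |\varepsilon| M$ for the optimal value function, by testing each problem's optimizer in the other problem, and then conclude directly by taking limits in $\innerp{(Q^0 + \varepsilon_t P)}{X_t} = \theta(\varepsilon_t)$. Your packaging is more modular (continuity of the value function at $\varepsilon = 0$ is isolated as a lemma-like fact, valid for any perturbation direction $P$) and avoids the ad hoc constants of the paper's contradiction; the paper's version, conversely, is self-contained and makes the quantitative dependence on $\nu$, $\mu$, $\|P\|_{\max}$ explicit. A minor point in your favor: you correctly extract a convergent subsequence before defining $x^*$, whereas the paper asserts $x_{\lim} \coloneqq \lim_{t\to\infty} x_t$ exists from compactness alone, which strictly speaking only yields a convergent subsequence --- a harmless slip, since a subsequence suffices for the conclusion, but your treatment is the precise one.
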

\begin{proof} 
    Let $\Gamma$ and $\Delta$ be
    the feasible sets of \eqref{eq:hqcqp} and \eqref{eq:hsdr}, respectively:
    \begin{align*}
        \Gamma \coloneqq & \left\{
            x \in \bR^n \,\middle|\,
            \innerp{Q^p}{(x\trans{x})} \leq b_p, \quad p = 1, \ldots, m\right\}, \\
        \Delta \coloneqq & \left\{
            X \in \bS^n \,\middle|\,
        X \succeq O,\; \innerp{Q^p}{X} \leq b_p, \quad p = 1, \ldots, m\right\}.
    \end{align*}
    Note that $\Gamma$ is a closed set and
    both $\Gamma$ and $\Delta$ are bounded by \autoref{asm:assumptions}.
    Thus, $\Gamma$ is a compact set in $\bR^n$.
    For any $t \geq 1$,
    let $x_t$ and $X_t$ be optimal solutions
    of \eqref{eq:perturbed_hqcqp} and \eqref{eq:perturbed_sdr}
    satisfying $x_t\trans{x_t} = X_t$,
    which follows from the assumption on the exactness of the relaxation of \eqref{eq:perturbed_hqcqp}.
    As a result, a sequence as $\{x_t\}_{t = 1}^\infty$ can be defined.

    Since  
    the feasible sets of \eqref{eq:hqcqp} and \eqref{eq:perturbed_hqcqp} are identical,
    we have $x_t \in \Gamma$.
    From the compactness of $\Gamma$,
    there exists $x_{\lim} \coloneqq \lim_{t \to \infty} x_t$ in $\Gamma$.
    As $X_{\lim} \coloneqq x_{\lim}\trans{x_{\lim}} \in \Delta$
    by the relationship between $\Gamma$ and $\Delta$,
    the rank-1 matrix $X_{\lim}$ is also feasible for \eqref{eq:hsdr}.

    To show that $X_{\lim}$ is an optimal solution of \eqref{eq:hsdr},
    we assume that there exists another feasible $X_\mathrm{opt} \neq X_{\lim}$
    such that $\nu \coloneqq \innerp{Q^0}{X_{\lim}} - \innerp{Q^0}{X_\mathrm{opt}} > 0$.
    Since $\Delta$ is bounded,
    there exists $\mu$ such that $\|X\|_{\max} < \mu$ for any $X \in \Delta$,
    which implies $\|x\trans{x}\|_{\max} < \mu$ for any $x \in \Gamma$.
    For a sufficiently large $t$ satisfying
    \begin{align*}
      &  \varepsilon_t < \frac{\nu}{4n^2\mu\|P\|_{\max}} \quad \text{ and } \quad
        \|x_t\trans{x_t} - X_{\lim}\|_{\max} < \frac{\nu}{2n^2\|Q^0\|_{\max}},
    \end{align*}
    we have
    \begin{align*}
    & \innerp{Q^0}{\left(x_t\trans{x_t} - X_{\lim}\right)} > -\frac{\nu}{2}, \\
    & \varepsilon_t\innerp{P}{(x_t\trans{x_t})} > -\frac{\nu}{4}, \\
    & \innerp{Q^0}{X_{\lim}} = \innerp{Q^0}{X_\mathrm{opt}} + \nu, \\
    & \frac{\nu}{4} > \varepsilon_t\innerp{P}{X_\mathrm{opt}}.
    \end{align*}
    %
    Consequently, adding these inequalities and the equality,
    \begin{equation*}
        \innerp{(Q^0 + \varepsilon_t P)}{(x_t\trans{x_t})} >
        \innerp{(Q^0 + \varepsilon_t P)}{X_\mathrm{opt}},
    \end{equation*}
    which contradicts the optimality of $x_t\trans{x_t}$ in \eqref{eq:perturbed_sdr}. The desired result follows.
\end{proof} 
\noindent
Zhou et al.~\cite[Lemma 1]{Zhou2019} focused on a specific QCQP arising from the oprimal power flow problem,
and proved that the exactness of its SDP relaxation can be determined
by the SDP relaxation of its $\varepsilon$-perturbed problems.
\autoref{lem:perturbation_exactness} is valid for a slightly more general case,
which requires only Assumptions~\ref{asm:assumptions} and \ref{asm:assumptions2}.


\subsection{Main results} 
We present our main results on the exactness of SDP relaxations 
for forest-structured QCQPs.

For any fixed indices $k, \ell \in [n]$,
we define the system \eqref{eq:system}:
\begin{equation} \tag{$\FC_{{k}{\ell}}$} \label{eq:system}
    y \geq 0, \quad
    S(y) \succeq O, \quad [S(y)]_{k\ell} = 0,
\end{equation}
where $[S(y)]_{k\ell}$ represents the $(k, \ell)$-th element of $S(y)$
 defined in \eqref{eq:hsdrd}.
Since $k,\ell\in[n]$, we can construct $n^2$ systems \eqref{eq:system}.

If  for some $\bar{k}$ and $\bar{\ell}$ $(1\leq \bar{k},\bar{\ell} \leq n)$ and 
for a feasible point $y$ of  \eqref{eq:hsdrd} the system $(\FC_{ {\bar{k}}{\bar{\ell}} })$ is infeasible,
 then the value $[S(y)]_{\bar{k}\bar{\ell}}$ must be nonzero.

\begin{theorem} \label{thm:main}
    Suppose \eqref{eq:hqcqp} is a forest-structured QCQP.
    If $(\FC_{{k}{\ell}})$ have no feasible solutions for all $(k, \ell) \in \ioffdiagonal{n}$,
    then the SDP relaxation \eqref{eq:hsdr} is exact.
\end{theorem}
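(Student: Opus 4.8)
The plan is to bound the rank of an optimal solution $X^*$ of the primal SDP \eqref{eq:hsdr} through complementarity with the dual slack $S(y^*)$, and to make $\rank S(y^*)$ as large as the forest structure permits via \autoref{lem:lowerbound_rank_treestructured}. First I would invoke \autoref{asm:assumptions}: the feasible set of \eqref{eq:hsdr} is bounded and strong duality holds, so both \eqref{eq:hsdr} and its dual \eqref{eq:hsdrd} attain optimal solutions $X^*$ and $y^*$, with $\innerp{X^*}{S(y^*)} = 0$. Since $X^* \succeq O$ and $S(y^*) \succeq O$, this forces the matrix product $X^* S(y^*) = O$, whence $\mathrm{range}(X^*) \subseteq \ker S(y^*)$ and therefore $\rank X^* \le n - \rank S(y^*)$.

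The crucial step is the lower bound on $\rank S(y^*)$. Because $y^*$ is dual feasible it satisfies $y^* \ge 0$ and $S(y^*) \succeq O$; if some edge entry $[S(y^*)]_{k\ell}$ with $(k,\ell)\in\ioffdiagonal{n}$ were zero, then $y^*$ would be a feasible point of $(\FC_{k\ell})$, contradicting the hypothesis. Hence every edge entry of $S(y^*)$ is nonzero. Moreover each $Q^p$ is forest-structured (its graph is a subgraph of $G(\VC,\EC)$), so $S(y^*)$ is a positive semidefinite forest-structured matrix all of whose off-diagonal entries indexed by $\ioffdiagonal{n}$ are nonzero. Applying \autoref{lem:lowerbound_rank_treestructured} to each connected component of $G(\VC,\EC)$ gives $\rank S(y^*) \ge n - c$, where $c$ denotes the number of components (trees).

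When the aggregate sparsity graph is a single tree, $c = 1$ and the two bounds combine to $\rank X^* \le 1$, so $X^* = x^*\trans{(x^*)}$ and the relaxation is exact. For a genuine forest I would use that no edge of $G(\VC,\EC)$ joins distinct components, so every $Q^p$, and hence $S(y^*)$, is block diagonal with respect to the partition of $\VC$ into components. Complementarity then decouples: the $r$-th diagonal block $X^{(r)}$ of $X^*$ satisfies $X^{(r)} S^{(r)} = O$ with $\rank S^{(r)} \ge n_r - 1$ ($n_r$ the size of the $r$-th tree), so each $X^{(r)}$ has rank at most one, say $X^{(r)} = x^{(r)}\trans{(x^{(r)})}$. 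Stacking these into $x^* \coloneqq (x^{(1)}, \ldots, x^{(c)})$, the rank-one matrix $x^*\trans{(x^*)}$ shares the diagonal blocks of $X^*$; since the block-diagonal data $Q^p$ see only those blocks, $x^*\trans{(x^*)}$ attains the same objective and constraint values as $X^*$, and is thus an optimal rank-one solution, giving exactness.

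The step I expect to be the main obstacle is the disconnected forest together with the duality bookkeeping: one must ensure the dual optimum $y^*$ is genuinely attained (so the rank bound applies to an actual matrix) and verify that the stacked rank-one matrix is feasible rather than merely objective-matching. The perturbation device of \autoref{lem:perturbation_exactness} is the natural safeguard for any residual degeneracy: if ties among optimal faces or non-attainment obstruct the direct argument, I would instead add $\varepsilon_t P$ to the objective, carry out the rank argument for each perturbed problem \eqref{eq:perturbed_hqcqp}, and let $\varepsilon_t \to 0$ to transfer exactness back to \eqref{eq:hqcqp}.
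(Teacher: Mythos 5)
Your proof is correct, and for the connected case it coincides with the paper's \autoref{lem:main_wo_sparsity}: dual attainment via \autoref{asm:assumptions}, complementarity $X^*S(y^*)=O$, infeasibility of \eqref{eq:system} forcing every edge entry of $S(y^*)$ to be nonzero, and \autoref{lem:lowerbound_rank_treestructured} giving $\rank S(y^*)\geq n-1$, hence $\rank X^*\leq 1$. Where you genuinely diverge is the disconnected forest case. The paper connects the forest: it adds a set $\DC$ of $\kappa-1$ artificial edges to make the aggregate sparsity graph a tree, perturbs the objective by $\varepsilon P$ with $P=\sum_{(i,j)\in\DC}(E_{ij}+E_{ji})$, verifies that the perturbed systems \eqref{eq:system} remain infeasible (trivially on $\DC$, and on $\ioffdiagonal{n}$ by zeroing the $\DC$-entries of the perturbed slack matrix and invoking \autoref{lem:treestructured_keeps_psd} to keep positive semidefiniteness), applies the connected-case lemma to each perturbed problem, and then lets $\varepsilon_t\to 0$ via \autoref{lem:perturbation_exactness}. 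You instead exploit that every $Q^p$, and hence $S(y^*)$, is block diagonal across the components, decouple complementarity into $X^{(r)}S^{(r)}=O$ per block, bound each block rank by one, and stack the resulting vectors into a rank-one matrix that matches $X^*$ on every diagonal block and therefore attains the same objective and constraint values --- a valid construction, since the off-diagonal blocks of $X^*$ are invisible to the block-diagonal data and $x^*\trans{(x^*)}\succeq O$ automatically. Your route is shorter and constructive: for this theorem it dispenses entirely with \autoref{lem:treestructured_keeps_psd} and the limiting argument of \autoref{lem:perturbation_exactness} (which you rightly keep only as a fallback, though no degeneracy actually arises --- Slater's condition from \autoref{asm:assumptions} \ref{asm:assumption_3} guarantees dual attainment, and compactness guarantees primal attainment, so the direct argument goes through). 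What the paper's heavier route buys is reuse: the perturbation lemma and the edge-zeroing lemma are needed anyway for \autoref{prop:exact_condition_for_signdefinite} and the GTRS analysis in \autoref{ssec:gtrs}, so the authors amortize that machinery. One honest difference in the conclusions: the paper's connected-case lemma shows \emph{every} optimal $X^*$ of the (perturbed) relaxation is rank one, while your forest argument, like the paper's final limiting step, establishes the \emph{existence} of a rank-one optimal solution; this is exactly the notion of exactness the paper itself uses in \autoref{lem:perturbation_exactness}, so nothing is lost.
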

To prove the main result described in \autoref{thm:main}, 
we impose a condition in Lemma \ref{lem:main_wo_sparsity}
for the exact SDP relaxation:
the aggregate sparsity graph should be connected
and $(\FC_{{k}{\ell}})$ should have no solutions for any $(k, \ell) \in \ioffdiagonal{n}$.
We note that the system $(\FC_{{k}{\ell}})$ 
must be tested for feasibility for all $(k, \ell) \in \ioffdiagonal{n}$ in \autoref{thm:main}.
We need additionally to examine whether
the aggregate sparsity graph is connected in \autoref{lem:main_wo_sparsity}.
Thus, the sufficient condition presented in \autoref{thm:main} can be applied to 
more general QCQPs than the one in \autoref{lem:main_wo_sparsity}.
Lemma \ref{lem:main_wo_sparsity} is followed by
   a proof for \autoref{thm:main} by relaxing the sufficient condition in \autoref{lem:main_wo_sparsity}.
\begin{lemma} \label{lem:main_wo_sparsity}
    Suppose \eqref{eq:hqcqp} is a forest-structured QCQP
    and the aggregate sparsity graph $G(\VC,\EC)$ of its SDP relaxation
    defined in \eqref{eq:aggregate_sparsity_pattern} is connected.
    If $(\FC_{{k}{\ell}})$ have no feasible solutions
    for all $(k, \ell) \in \ioffdiagonal{n}$,
    then the SDP relaxation \eqref{eq:hsdr} is exact.
\end{lemma}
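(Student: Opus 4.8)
The plan is to adopt the dual-rank strategy of Burer and Ye: I would bound $\rank X^*$ for a primal optimum $X^*$ of \eqref{eq:hsdr} from above by exhibiting a complementary dual optimum $y^*$ of \eqref{eq:hsdrd} whose matrix $S(y^*)$ has rank at least $n-1$, the latter coming from \autoref{lem:lowerbound_rank_treestructured}. To set this up, note that by \autoref{asm:assumptions}~\ref{asm:assumption_1} and~\ref{asm:assumption_2} the feasible set of \eqref{eq:hsdr} is compact, so a primal optimum $X^*$ is attained; and by \autoref{asm:assumptions}~\ref{asm:assumption_2} and~\ref{asm:assumption_3} strong duality holds and a dual optimum $y^*$ of \eqref{eq:hsdrd} is attained, with $\innerp{Q^0}{X^*} = -\trans{b}y^*$.

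Next I would extract positive semidefinite complementarity. Writing $S(y^*) = Q^0 + \sum_{p=1}^m y^*_p Q^p$ gives the identity $\innerp{Q^0}{X^*} + \trans{b}y^* = \innerp{S(y^*)}{X^*} + \sum_{p=1}^m y^*_p\left(b_p - \innerp{Q^p}{X^*}\right)$; the left-hand side is the vanishing duality gap, while on the right $\innerp{S(y^*)}{X^*} \ge 0$ (since $X^*, S(y^*) \succeq O$) and each summand $y^*_p\left(b_p - \innerp{Q^p}{X^*}\right) \ge 0$ (since $y^* \ge 0$ and $X^*$ is feasible). Hence every term vanishes, in particular $\innerp{S(y^*)}{X^*} = 0$, which for positive semidefinite matrices forces $X^* S(y^*) = O$. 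Sylvester's rank inequality applied to this product then yields $\rank X^* \le n - \rank S(y^*)$.

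The crux is the lower bound $\rank S(y^*) \ge n-1$. As a dual optimum, $y^*$ satisfies $y^* \ge 0$ and $S(y^*) \succeq O$; if some entry $[S(y^*)]_{k\ell}$ with $(k,\ell) \in \ioffdiagonal{n}$ vanished, then $y^*$ would be feasible for $(\FC_{k\ell})$, contradicting the hypothesis that $(\FC_{k\ell})$ is infeasible for every $(k,\ell) \in \ioffdiagonal{n}$. Therefore all such off-diagonal entries of $S(y^*)$ are nonzero, so the sparsity structure graph of $S(y^*)$ contains every edge of the connected forest $G(\VC,\EC)$ and is itself connected and forest-structured. \autoref{lem:lowerbound_rank_treestructured} then gives $\rank S(y^*) \ge n-1$, and combined with the previous step, $\rank X^* \le 1$. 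Thus $X^* = x^*\trans{x^*}$ for some $x^*$ (with $x^*=0$ when $X^*=O$); since $X^*$ is feasible for \eqref{eq:hsdr}, this $x^*$ is feasible for \eqref{eq:hqcqp} with equal objective value, so the relaxation is exact.

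I expect the main obstacle to be this crux step: one must ensure that the infeasibility of the systems $(\FC_{k\ell})$, which is a statement about all dual feasible points, transfers to the particular complementary optimum $S(y^*)$, and that the hypotheses of \autoref{lem:lowerbound_rank_treestructured} — connectedness of the sparsity graph and nonvanishing of every entry indexed by $\ioffdiagonal{n}$ — are genuinely satisfied; the connectedness requirement is exactly what separates this lemma from \autoref{thm:main}. Should the complementarity leave $\rank S(y^*)$ ambiguous in degenerate instances, I would instead invoke \autoref{lem:perturbation_exactness}, proving exactness for each $\varepsilon_t$-perturbed problem \eqref{eq:perturbed_hqcqp} along a sequence $\varepsilon_t \to 0$ and passing to the limit; this perturbation device is also what ultimately removes the connectedness assumption in \autoref{thm:main}.
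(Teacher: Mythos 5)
Your proposal is correct and follows essentially the same route as the paper's proof: a dual optimum $y^*$ exists by \autoref{asm:assumptions}, infeasibility of every $(\FC_{k\ell})$ forces all entries of $S(y^*)$ indexed by $\ioffdiagonal{n}$ to be nonzero, \autoref{lem:lowerbound_rank_treestructured} gives $\rank S(y^*) \geq n-1$, and complementarity $X^*S(y^*) = O$ from strong duality plus Sylvester's rank inequality yields $\rank X^* \leq 1$. Your explicit derivation of $X^*S(y^*) = O$ from the vanishing duality gap merely spells out what the paper cites as a consequence of strong duality, and your fallback to \autoref{lem:perturbation_exactness} is unnecessary here (it is the device used only for \autoref{thm:main}, where connectedness is dropped).
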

\begin{proof} 
    Let $X^*$ be any optimal solution for \eqref{eq:hsdr}.
    By \autoref{asm:assumptions},
    there exists an optimal solution $y^*$ for \eqref{eq:hsdrd}.
    Since $y^* \geq 0$ and $S(y^*) \succeq O$,
    we have $S(y^*)_{k\ell} \neq 0$ for every $(k, \ell) \in \ioffdiagonal{n}$ by the assumption.
    This implies that all the off-diagonal elements on $\ioffdiagonal{n}$ of the forest-structured matrix $S(y^*)$ are nonzeros,
    thus $\rank{S(y^*)} \geq n - 1$ by \autoref{lem:lowerbound_rank_treestructured}.
    Since \autoref{asm:assumptions} \ref{asm:assumption_2} and \ref{asm:assumption_3}  hold,
    $X^*S(y^*) = O$ by the strong duality.
    From the Sylvester's rank inequality~\cite{Anton2014},
        $\rank{X^*} + \rank{S(y^*)} \leq n + \rank{X^*S(y^*)}$ holds for $X^*$ and $S(y^*)$.
    Therefore, $\rank{X^*} \leq n + \rank{O} - \rank{S(y^*)} \leq n + 0 - (n - 1) = 1$.
\end{proof} 

\begin{proof} (\autoref{thm:main}) \\ 
    Let $G(\VC,\EC)$ be the aggregate sparsity graph of the SDP relaxation
    defined in \eqref{eq:aggregate_sparsity_pattern} for the forest-structured QCQP,
    and let $\kappa$ denote the number of connected components of $G(\VC,\EC)$.
    Since $G(\VC,\EC)$ consists of one or more trees,
    we can construct a set $\DC$ with $\kappa - 1$ edges
    such that $G(\VC,\EC \cup \DC)$ is a tree  (i.e., a connected graph with no cycles).
    Let $P \coloneqq \sum_{(i, j) \in \DC} (E_{ij} + E_{ji})$ be
    a perturbation matrix with the $n \times n$ matrices $E_{ij}$'s.
      With $\varepsilon > 0$, consider the  $\varepsilon$-perturbed QCQP \eqref{eq:perturbed_hqcqp} with this $P$.
    Obviously, the aggregate sparsity graph of the
    $\varepsilon$-perturbed QCQP \eqref{eq:perturbed_hqcqp} is $G(\VC,\EC \cup \DC)$.
    The system \eqref{eq:system} that corresponds to
    the $\varepsilon$-perturbed QCQP \eqref{eq:perturbed_hqcqp} can be written as:
    \begin{subequations} \label{eq:system_offd_perturbed_hqcqp_all}
        \begin{gather}
            y \geq 0, \quad
            Q^0 + \sum_{(i, j) \in \DC} \varepsilon (E_{ij} + E_{ji})
            + \sum_{p = 1}^m y_p Q^p \succeq O, \label{eq:system_offd_perturbed_hqcqp_a} \\
            \left[Q^0 + \sum_{(i, j) \in \DC} \varepsilon (E_{ij} + E_{ji})\right]_{k\ell}
            + \sum_{p = 1}^m y_p [Q^p]_{k\ell} = 0 \label{eq:system_offd_perturbed_hqcqp_b}.
        \end{gather}
    \end{subequations}
    For the exact SDP relaxation  of \eqref{eq:perturbed_hqcqp},
    we need to show that
    \eqref{eq:system_offd_perturbed_hqcqp_all} has no feasible solutions for all $(k, \ell) \in \ioffdiagonal{n} \cup \DC$.
    First, suppose $(k, \ell) \in \DC$.
    Since $[Q^p]_{k\ell} = 0 \; (\forall p = 0, 1, \ldots, m)$,
    the left hand side of \eqref{eq:system_offd_perturbed_hqcqp_b} becomes 
    \begin{align*}
        & \varepsilon \sum_{(i, j) \in \DC} [E_{ij} + E_{ji}]_{k\ell} =\varepsilon >0.
    \end{align*}
    We have shown that
    \eqref{eq:system_offd_perturbed_hqcqp_b} does not hold for any $y \geq 0$.

    Next, we suppose $(k, \ell) \in \ioffdiagonal{n}$.
    Assume that \eqref{eq:system_offd_perturbed_hqcqp_all} has a solution $\hat{y}$.
    Then,
    \begin{equation*}
        \hat{y} \geq 0, \quad
         Q^0 + \sum_{(i, j) \in \DC} \varepsilon (E_{ij} + E_{ji}) + \sum_{p = 1}^m \hat{y}_p Q^p \succeq O, \quad
        [Q^0]_{k\ell} + \sum_{p = 1}^m \hat{y}_p [Q^p]_{k\ell} = 0.
    \end{equation*}
    We now define a forest-structured matrix $\overline{S} \in \SMAT^n$ as: 
    \begin{equation*}
        [\overline{S}]_{qr} = \begin{cases}
            \left[Q^0 + \sum_{(i, j) \in \DC} \varepsilon (E_{ij} + E_{ji}) + \sum_{p = 1}^m \hat{y}_p Q^p\right]_{qr} & (q, r) \not\in \DC, \\
            0 & (q, r) \in \DC.
        \end{cases}
    \end{equation*}
    If $(q, r) \not\in \DC$, by definition, we have $\left[E_{ij} + E_{ji}\right]_{qr} = 0$
    for any $(i, j) \in \DC$.
    If $(q, r) \in \DC$, we have $\left[Q^0 + \sum_{p = 1}^m \hat{y}_p Q^p\right]_{qr} = 0$
    since $[Q^p]_{qr} = 0$ for any $p \in \{0\} \cup [m]$.
    Thus, we obtain that $\overline{S} = Q^0 + \sum_{p = 1}^m \hat{y}_p Q^p$.
    By \autoref{lem:treestructured_keeps_psd},
    it follows that $\overline{S} \succeq O$,
    which implies that $\hat{y}$ solves the following system:
    \begin{equation} \label{eq:system_offd_perturbed_satisfied_removed}
        y \geq 0, \quad
        Q^0 + \sum_{p = 1}^m y_p Q^p \succeq O, \quad
        [Q^0]_{k\ell} + \sum_{p = 1}^m y_p [Q^p]_{k\ell} = 0.
    \end{equation}
    We know that the system \eqref{eq:system_offd_perturbed_satisfied_removed},
    which is equivalent to \eqref{eq:system} for \eqref{eq:hqcqp},
    has no feasible points by the assumption. This is a contradiction.
    Thus, \eqref{eq:system_offd_perturbed_hqcqp_all} has no feasible points.
    By \autoref{lem:main_wo_sparsity},
    the SDP relaxation for \eqref{eq:perturbed_hqcqp} is exact
    for all $\varepsilon > 0$.

    We now take a sequence $\{\varepsilon_t\}_{t = 1}^\infty$ which converges to zero
    so that the SDP relaxation for
    $\varepsilon_t$-perturbed QCQP \eqref{eq:perturbed_hqcqp} is exact for all $t = 1, 2, \ldots$.
    By \autoref{lem:perturbation_exactness},
    we conclude that
    the SDP relaxation for \eqref{eq:hqcqp} is exact.
\end{proof} 

Theorem~\ref{thm:main}  can be applied to 
the particular case of tridiagonal QCQPs that will be discussed in section \ref{sec:application_nontri}.

Since $\left|\ioffdiagonal{n}\right| \leq n - 1$ by definition,
we must solve at most $n - 1$ systems
in order to determine whether the SDP relaxation of a QCQP is exact.
It may be very time-consuming to solve all $n-1$ systems \eqref{eq:system} due to the positive semidefinite constraint.
To mitigate this difficulty,
 conditions that do not  dependent on \eqref{eq:system} will be discussed in  Corollaries \ref{prop:exact_condition_for_signdefinite} and
  \ref{prop:exact_condition_for_one_equality}.

It was shown in \cite{BOSE2015,kim2003exact,Lavaei2012}
 that QCQP \eqref{eq:qcqp} can be solved exactly by the SDP relaxation if the data matrices of the QCQP is sign-definite.
Using the feasibility of the system \eqref{eq:system}, we provide an alternative proof for 
the exact SDP relaxation of the QCQP \eqref{eq:hqcqp} 
if the set of $(i, j)$-th elements of all matrices is sign-definite
for every superdiagonal index $(i, j)$.
\begin{corollary} \label{prop:exact_condition_for_signdefinite}
    Suppose \eqref{eq:hqcqp} is a forest-structured QCQP.
    If the set $\{[Q^0]_{k\ell}, [Q^1]_{k\ell}, \ldots, [Q^m]_{k\ell}\}$ is sign-definite
    for all $(k, \ell) \in \ioffdiagonal{n}$,
    then the SDP relaxation \eqref{eq:hsdr} is exact.
\end{corollary}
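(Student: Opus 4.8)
The plan is to reduce to \autoref{thm:main} by a perturbation argument, since sign-definiteness alone does \emph{not} directly force the systems $(\FC_{k\ell})$ to be infeasible: when $[Q^0]_{k\ell}=0$ one can satisfy $[S(y)]_{k\ell}=0$ merely by setting to zero those multipliers $y_p$ with $[Q^p]_{k\ell}\neq 0$, and the surviving multipliers may still render $S(y)\succeq O$ (so the hypothesis of \autoref{thm:main} can genuinely fail on the unperturbed problem even when exactness holds). I would therefore first perturb the objective so that $[Q^0]_{k\ell}$ becomes strictly sign-aligned and nonzero on every edge, apply \autoref{thm:main} to each perturbed problem, and then pass to the limit via \autoref{lem:perturbation_exactness}.

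Concretely, for each $(k,\ell)\in\ioffdiagonal{n}$ I would record a sign $\sigma_{k\ell}\in\{+1,-1\}$: since $(k,\ell)$ is an edge, at least one of $[Q^0]_{k\ell},\ldots,[Q^m]_{k\ell}$ is nonzero, and by sign-definiteness I set $\sigma_{k\ell}=1$ if these numbers are all nonnegative and $\sigma_{k\ell}=-1$ if they are all nonpositive. I would then define the perturbation matrix $P:=\sum_{(k,\ell)\in\ioffdiagonal{n}}\sigma_{k\ell}(E_{k\ell}+E_{\ell k})$ and consider, for $\varepsilon>0$, the $\varepsilon$-perturbed QCQP \eqref{eq:perturbed_hqcqp} with this $P$. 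Because the support of $P$ lies in $\ioffdiagonal{n}$ (with its transpose), the perturbation introduces no new nonzeros, and since each aligned entry $[Q^0]_{k\ell}+\varepsilon\sigma_{k\ell}$ never vanishes it destroys none either; hence the aggregate sparsity graph of the perturbed problem coincides with $G(\VC,\EC)$, so the perturbed problem is again forest-structured and inherits \autoref{asm:assumptions} (only the objective is altered). If $\ioffdiagonal{n}=\emptyset$, i.e.\ all $Q^p$ are diagonal, the hypothesis of \autoref{thm:main} holds vacuously and there is nothing to perturb.

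The key step is to verify that every system $(\FC_{k\ell})$ associated with the perturbed problem is infeasible. For the perturbed dual matrix $S_\varepsilon(y)=(Q^0+\varepsilon P)+\sum_{p=1}^m y_pQ^p$ and any $y\geq 0$, I would compute $[S_\varepsilon(y)]_{k\ell}=[Q^0]_{k\ell}+\varepsilon\sigma_{k\ell}+\sum_{p=1}^m y_p[Q^p]_{k\ell}$. When $\sigma_{k\ell}=1$ every summand is nonnegative and the middle term equals $\varepsilon>0$, so $[S_\varepsilon(y)]_{k\ell}\geq\varepsilon>0$; the case $\sigma_{k\ell}=-1$ is symmetric and gives $[S_\varepsilon(y)]_{k\ell}\leq-\varepsilon<0$. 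Thus $[S_\varepsilon(y)]_{k\ell}\neq 0$ for all $y\geq 0$, so no $y$ satisfies $(\FC_{k\ell})$, and this holds for every $(k,\ell)\in\ioffdiagonal{n}$. By \autoref{thm:main}, the SDP relaxation \eqref{eq:perturbed_sdr} of the $\varepsilon$-perturbed problem is exact, for every $\varepsilon>0$.

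Finally I would take any sequence $\varepsilon_t$ with $\varepsilon_t\to 0$; since there is at least one edge, $P\neq O$, so \autoref{lem:perturbation_exactness} applies and yields the exactness of \eqref{eq:hsdr} for the original problem. The only genuinely delicate point is the one flagged at the outset: recognizing that the unperturbed systems $(\FC_{k\ell})$ need not be infeasible, so the perturbation (rather than a direct appeal to \autoref{thm:main}) is essential. Everything else is routine sign bookkeeping together with the checks that the perturbation preserves the forest structure and the standing assumptions.
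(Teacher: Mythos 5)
Your proposal is correct and follows essentially the same route as the paper: perturb the objective so that every system $(\FC_{k\ell})$ of the perturbed problem becomes infeasible, apply \autoref{thm:main} to each $\varepsilon$-perturbed problem, and pass to the limit via \autoref{lem:perturbation_exactness}. The only difference is cosmetic --- the paper's $P$ perturbs just the edges with $[Q^0]_{k\ell}=0$ (exactly where, as you correctly flag, the unperturbed system can be feasible), whereas you perturb every edge with a sign-aligned $\pm 1$ --- and your variant gives slightly more uniform bookkeeping, including the explicit treatment of the $P \neq O$ requirement when $\ioffdiagonal{n}=\emptyset$.
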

\begin{proof} \par 
    Define $P \coloneqq [p_{ij}] \in \bS^n$ where
    \begin{equation*}
        p_{ij} := \begin{cases}
            +1 & \text{if $[Q^0]_{ij} = 0$ and  $\sum_{p = 1}^m [Q^p]_{ij} \geq 0$}, \\
            -1 & \text{if $[Q^0]_{ij} = 0$ and  $\sum_{p = 1}^m [Q^p]_{ij} < 0$}, \\
            0  & \text{otherwise}.
        \end{cases}
    \end{equation*}
    Let $\{\varepsilon_t\}_{t = 0}^\infty \subset \bR$ be a sequence converging to $0$.
    Consider the $\varepsilon_t$-perturbed problem \eqref{eq:perturbed_hqcqp} with the $P$ defined above.
    The corresponding system \eqref{eq:system} is
    \begin{equation*}
        y \geq 0, \quad
        S(y) = Q^0 + \varepsilon_tP + \sum_{p = 1}^m y_pQ^p \succeq O, \quad
        [S(y)]_{k\ell} = [Q^0]_{k\ell} + \varepsilon_tp_{k\ell} + \sum_{p = 1}^m y_p[Q^p]_{k\ell} = 0.
    \end{equation*}
    Now we analyze the feasibility of the system for any $(k, \ell) \in \ioffdiagonal{n}$. 
    \begin{enumerate}[label={(\alph*)}]
        \item
            If $[Q^0]_{k\ell} \neq 0$, then 
            \begin{equation*}
                [Q^0]_{k\ell} + \sum_{p = 1}^m y_p[Q^p]_{k\ell} = \begin{cases}
                    > 0 & \text{if $[Q^0]_{k\ell} > 0$,} \\
                    < 0 & \text{if $[Q^0]_{k\ell} < 0$,}
                \end{cases}
            \end{equation*}
            by the sign-definite assumption on the set $\{[Q^0]_{k\ell}, [Q^1]_{k\ell}, \ldots, [Q^m]_{k\ell}\}$.
            We have $[S(y)]_{k\ell} \neq 0$ for any $y \geq 0$ from $p_{k\ell} = 0$,
            therefore the system has no solution.
        \item
            If $[Q^0]_{k\ell} = 0$,
            then for any $y \geq 0$,
            \begin{equation*}
                [Q^0]_{k\ell} + \sum_{p = 1}^m y_p[Q^p]_{k\ell} = \begin{cases}
                    \geq 0 & \text{if $\sum_{p = 1}^m [Q^p]_{k\ell} \geq 0$,} \\
                    \leq 0 & \text{otherwise.}
                \end{cases}
            \end{equation*}
            From
            \begin{equation*}
                \varepsilon_tp_{k\ell} = \begin{cases}
                    \varepsilon > 0 & \text{if $\sum_{p = 1}^m [Q^p]_{k\ell} \geq 0$,} \\
                    -\varepsilon < 0 & \text{otherwise,}
                \end{cases}
            \end{equation*}
            we have $[S(y)]_{k\ell} \neq 0$,
            which  implies that the system has no solution.
    \end{enumerate}
    As  (a) and (b) cover all possible cases,
    the SDP relaxation of the $\varepsilon_t$-perturbed problem is exact
    for any $\varepsilon_t$ by \autoref{thm:main},
    hence  the original QCQP \eqref{eq:hqcqp} is also exact by \autoref{lem:perturbation_exactness}.
\end{proof} 
\noindent
We will use Corollary \ref{prop:exact_condition_for_signdefinite}
to prove the exact  SDP relaxation for a class of QCQPs in \autoref{ssec:gtrs}.

Next, we examine  the following QCQP:
\begin{mini} 
    {}{\trans{x}Q^0x}{\label{eq:hqcqp_one_equality}}{}
    \addConstraint{\trans{x}Q^1x}{= b_p.}
\end{mini}
%
To write \eqref{eq:hqcqp_one_equality} in the form of \eqref{eq:hqcqp}, the equality constraint is converted  into
two inequality constraints $\trans{x}Q^1x \leq b_1$ and $\trans{x}(-Q^1)x \leq -b_1$.
It is clear that  problem \eqref{eq:hqcqp_one_equality} is a special case of QCQP \eqref{eq:hqcqp} with two constraints.
Since both matrices $Q^1$ and $-Q^1$ appear in
the inequality constraints of \eqref{eq:hqcqp_one_equality},
the set $\{[Q^0]_{ij}, [Q^1]_{ij}, -[Q^1]_{ij}\}$ is not sign-definite
unless $[Q^1]_{ij} =0$.
As a result,  \autoref{prop:exact_condition_for_signdefinite} cannot be used to determine the exactness of 
the SDP relaxation of \eqref{eq:hqcqp}. 
For \eqref{eq:hqcqp_one_equality}, we propose  the following exactness condition.
%
\begin{corollary} \label{prop:exact_condition_for_one_equality}
    Suppose \eqref{eq:hqcqp} is a forest-structured QCQP.
    If for all $(i, j) \in \ioffdiagonal{n}$ such that $[Q^1]_{ij} \neq 0$,
    \begin{equation*}
        Q^0 - \frac{[Q^0]_{ij}}{[Q^1]_{ij}} Q^1 \not\succeq O,
    \end{equation*}
    then the SDP relaxation for \eqref{eq:hqcqp_one_equality} is exact.
\end{corollary}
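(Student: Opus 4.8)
The plan is to apply \autoref{prop:exact_condition_for_signdefinite} is not directly available here (since $\{[Q^0]_{ij}, [Q^1]_{ij}, -[Q^1]_{ij}\}$ fails sign-definiteness), so instead I would invoke \autoref{thm:main} directly and show that the system $(\FC_{k\ell})$ is infeasible for every superdiagonal index $(k,\ell) \in \ioffdiagonal{n}$. Recall that \eqref{eq:hqcqp_one_equality} is written in the form \eqref{eq:hqcqp} with the two constraint matrices $Q^1$ and $-Q^1$ and right-hand sides $b_1$ and $-b_1$. Hence the dual slack is $S(y) = Q^0 + y_1 Q^1 + y_2(-Q^1) = Q^0 + (y_1 - y_2)Q^1$ with $y_1, y_2 \geq 0$. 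Writing $t \coloneqq y_1 - y_2$, as $y$ ranges over the nonnegative orthant the scalar $t$ ranges over \emph{all} of $\bR$, so the system $(\FC_{k\ell})$ becomes: find $t \in \bR$ such that $Q^0 + t\,Q^1 \succeq O$ and $[Q^0]_{k\ell} + t\,[Q^1]_{k\ell} = 0$.

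The key step is then a case analysis on the index $(k,\ell)$. First I would dispose of the indices with $[Q^1]_{k\ell} = 0$: for these the equality constraint reads $[Q^0]_{k\ell} = 0$, but since \eqref{eq:hqcqp} is forest-structured and $(k,\ell) \in \ioffdiagonal{n}$ lies on an edge of the aggregate sparsity graph, at least one of $[Q^0]_{k\ell}, [Q^1]_{k\ell}$ must be nonzero; with $[Q^1]_{k\ell}=0$ this forces $[Q^0]_{k\ell} \neq 0$, and the equality $[Q^0]_{k\ell} = 0$ fails, so $(\FC_{k\ell})$ is infeasible. For the remaining indices, namely those with $[Q^1]_{k\ell} \neq 0$, the linear equation $[Q^0]_{k\ell} + t[Q^1]_{k\ell} = 0$ has the unique solution $t^* = -[Q^0]_{k\ell}/[Q^1]_{k\ell}$. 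At this single admissible value of $t$ the positive-semidefiniteness requirement becomes
\begin{equation*}
    Q^0 + t^* Q^1 = Q^0 - \frac{[Q^0]_{k\ell}}{[Q^1]_{k\ell}} Q^1 \succeq O,
\end{equation*}
which is precisely the condition that the hypothesis of the corollary rules out. Therefore $(\FC_{k\ell})$ admits no feasible $t$, hence no feasible $y \geq 0$.

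Having shown $(\FC_{k\ell})$ infeasible for every $(k,\ell) \in \ioffdiagonal{n}$, I would conclude by a direct appeal to \autoref{thm:main}, which yields exactness of the SDP relaxation \eqref{eq:hsdr} for the forest-structured QCQP \eqref{eq:hqcqp_one_equality}. The main subtlety I anticipate is the reparametrization argument: one must argue carefully that replacing the pair $(y_1, y_2) \geq 0$ by the single free scalar $t = y_1 - y_2 \in \bR$ preserves feasibility in both directions of the equivalence, i.e. that \emph{every} real $t$ is realized by some admissible nonnegative $(y_1,y_2)$ and conversely. This is the place where the two-sided inequality encoding of the equality constraint does its essential work, and it is what lets the condition be stated purely in terms of the matrix pencil $Q^0 - \lambda Q^1$ rather than in terms of the sign-definiteness notion used in \autoref{prop:exact_condition_for_signdefinite}. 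The edge-of-the-graph observation used in the $[Q^1]_{k\ell}=0$ case also deserves a brief justification, since it is what guarantees $[Q^0]_{k\ell}\neq 0$ there.
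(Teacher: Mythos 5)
Your proposal is correct and follows essentially the same route as the paper's own proof: reduce the dual slack to $S(y)=Q^0+zQ^1$ with $z=y_1-y_2\in\bR$, split into the cases $[Q^1]_{k\ell}=0$ (where the sparsity of the edge $(k,\ell)\in\ioffdiagonal{n}$ forces $[Q^0]_{k\ell}\neq 0$) and $[Q^1]_{k\ell}\neq 0$ (where the unique root of the linear equation contradicts the pencil hypothesis), and then invoke \autoref{thm:main}. The only difference is that you make explicit the two-sided surjectivity of the reparametrization $(y_1,y_2)\mapsto t$, which the paper leaves implicit.
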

\begin{proof} 
    Let $\EC$ be the aggregate sparsity structure of the SDP relaxation
    defined in \autoref{ssec:aggregate_sparsity}.
    If $(\FC_{{k}{\ell}})$ are infeasible for all $(k, \ell) \in \ioffdiagonal{n}$,
    then we know from \autoref{thm:main} that
    the SDP relaxation is exact.
    Thus, we analyze the feasibility of the system $(\FC_{{k}{\ell}})$ for
    each $(k, \ell) \in \ioffdiagonal{n}$.
    We compute $S(y)$ in the system as:
    \begin{align*}
        S(y)
        &= Q^0 + y_1Q^1 + y_2(-Q^1),
            \quad y = \begin{bmatrix} y_1 \\ y_2 \end{bmatrix} \geq 0,\\
        &= Q^0 + z Q^1, \quad z \coloneqq y_1 - y_2 \in \bR.
    \end{align*}
    Then, the system $(\FC_{{k}{\ell}})$ is equivalent to the following system:
    \begin{equation} \label{eq:system_z}
        Q^0 + zQ^1 \succeq O, \quad
        [Q^0]_{k\ell} + z[Q^1]_{k\ell} = 0.
    \end{equation}

    \begin{enumerate}[label={(\alph*)}]
        \item
            If $[Q^1]_{k\ell} = 0$, then $[Q^0]_{k\ell} \neq 0$ by $(k, \ell) \in \ioffdiagonal{n}$.
            Since any $z \in \bR$ does not satisfy
            the second equation in \eqref{eq:system_z},
            the system \eqref{eq:system_z} is infeasible.
        \item
                If $[Q^1]_{k\ell} \neq 0$,
            we assume 
            that a solution $z^*$ to \eqref{eq:system_z} exists.
            By solving the second equation in \eqref{eq:system_z} for $z^*$
            and substituting it into the first equation,
            we have
            \begin{equation*}
                Q^0 - \frac{[Q^0]_{ij}}{[Q^1]_{ij}} Q^1 \succeq O,
            \end{equation*}
            which is a contradiction. Thus, the desired result follows.
    \end{enumerate}
\end{proof} 

 Burer and Ye~\cite{Burer2019} 
proposed several methods to extend their result of diagonal QCQPs to a general class of  QCQPs
with exact SDP relaxations.
However, for some QCQPs, it is difficult to show the exactness of  relaxations by their conditions;
for example, it is hard to prove the existence of an exact relaxation for the generalized trust-region subproblem
(see \autoref{sec:application_nontri}).

\section{ Extension to a wider class of QCQPs based on simultaneous tridiagonalization} 
\label{sec:application_nontri}

To apply the results in  \autoref{sec:exactness_of_triqcqp} to a wider class of QCQPs,
we consider QCQPs
where the data matrices in \eqref{eq:qcqp} are not forest-structured.
 In this section, we suppose \autoref{asm:assumptions} \ref{asm:assumption_1} -- \ref{asm:assumption_3} and
that all the matrices of a QCQP are simultaneously tridiagonalizable.
In section 4.1, simultaneous tridiagonalization is discussed
in detail when $m=1$ (only one quadratic constraint). 
Then, a method to determine the exactness  of the SDP relaxation for these QCQPs is described.
In addition, we provide an alternative proof for the exactness of
 Generalized Trust-Region Subproblems (GTRS). 

The matrices $Q^0, Q^1, \ldots, Q^m \in \SMAT^n$ are called  simultaneous tridiagonalizable
if there exist a nonsingular matrix $U$ such that
$\trans{U}Q^0U,\trans{U}Q^1U, \ldots, \trans{U}Q^mU$ are tridiagonal matrices.
The simultaneous tridiagonalization is
a generalization of the simultaneous diagonalization  used in \cite{Burer2019}.
By replacing $U^{-1}x$ with  $\hat{x}$,
we obtain the tridiagonal QCQP equivalent to \eqref{eq:qcqp}:
\begin{mini}
    {}{\trans{\hat{x}}\left(\trans{U}Q^0U\right)\hat{x} + 2\trans{\left(\trans{U}q_0\right)}\hat{x} } 
        {\label{eq:qcqp_tridiagonalize}}{}
    \addConstraint{\trans{\hat{x} }\left(\trans{U}Q^pU\right)\hat{x}  + 2\trans{\left(\trans{U}q_p\right)}\hat{x} }
        {\leq b_p, \quad}{p = 1, \ldots, m.}
\end{mini}
The standard SDP relaxation of \eqref{eq:qcqp_tridiagonalize} is
 \begin{mini}
    {}{\innerp{\left(\trans{U}Q^0U\right)}{\widehat{X}} + 2\trans{\left(\trans{U}q_0\right)}\hat{x}}{\label{eq:Tsdr}}{} 
    \addConstraint{\innerp{\left(\trans{U}Q^pU\right)}{\widehat{X}} + 2\trans{\left(\trans{U}q_p\right)}\hat{x}} {\leq b_p, \quad}{p = 1, \ldots, m}
    \addConstraint{ \widehat{ X}}{\succeq \hat{x}\trans{\hat{x}}.}
\end{mini}
Obviously, if $\hat{x}$ is an optimal solution of \eqref{eq:qcqp_tridiagonalize}, then
$U\hat{x}$ is an optimal solution of the original problem~\eqref{eq:qcqp}.
We notice that the SDP relaxation   
\eqref{eq:sdr} of \eqref{eq:qcqp} is
at least as strong as the corresponding SDP relaxation \eqref{eq:Tsdr} for \eqref{eq:qcqp_tridiagonalize}. As a result,
if \eqref{eq:qcqp_tridiagonalize} has an exact relaxation,
then \eqref{eq:qcqp} also has an exact relaxation.
When \eqref{eq:qcqp_tridiagonalize} becomes a forest-structured QCQP in the homogeneous form,
 the exactness conditions in \autoref{sec:exactness_of_triqcqp}
can be applied to \eqref{eq:qcqp_tridiagonalize}.
We note that the exactness of the SDP relaxation for  QCQPs \eqref{eq:qcqp} can be determined
if their data matrices are simultaneous tridiagonalizable even when they are not forest-structured.

\subsection{Simultaneous tridiagonalization} 
\label{sec:simul_tri}
Simultaneous tridiagonalization  of multiple matrices
is an extension of  simultaneous diagonalization,
and it can be achieved by finding 
  a nonsingular matrix
that transforms all matrices to tridiagonal matrices.
\noindent
Recently, Sidje~\cite{Sidje2011} (See also Garvey et al.~\cite{Garvey2003}) introduced conditions
under which two matrices are simultaneous tridiagonalizable.
\begin{proposition} ~\cite{Sidje2011}  \label{prop:tridiagonalizable}
    Let $K, M \in \bS^n$ and $0 \neq \gamma \in \bR$.
    Suppose that the matrix pencil $K - \gamma M$ is nonsingular.
    Then,  $K$ and $M$ are simultaneously tridiagonalizable.
\end{proposition}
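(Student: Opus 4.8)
The plan is to reduce the simultaneous tridiagonalization of the pair $(K, M)$ to a single eigenstructure computation carried out in an indefinite inner product. Since $K = (K - \gamma M) + \gamma M$, it is equivalent to tridiagonalize the pair $(N, M)$ by congruence, where $N := K - \gamma M$ is symmetric and, by hypothesis, nonsingular. Once a nonsingular $U$ is found with $\trans{U} N U$ and $\trans{U} M U$ both tridiagonal, the identity $\trans{U} K U = \trans{U} N U + \gamma\,\trans{U} M U$ shows that $\trans{U} K U$ is tridiagonal as well, so the whole problem rests on the pair $(N, M)$.

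The key algebraic observation I would use is that the (generally nonsymmetric) matrix $C := N^{-1} M$ is self-adjoint with respect to the symmetric bilinear form $\langle x, y \rangle_N := \trans{x} N y$. Indeed, $N C = M$ is symmetric, so $\langle C x, y \rangle_N = \trans{x}\trans{C} N y = \trans{x} N C y = \langle x, C y \rangle_N$. First I would run an indefinite Lanczos process on $C$: starting from a nonisotropic vector $u_1$ and using the three-term recurrence $\tilde u_{j+1} = C u_j - \alpha_j u_j - \beta_{j-1} u_{j-1}$, the self-adjointness of $C$ forces each new residual to be $N$-orthogonal to all earlier vectors automatically, so only two coefficients are needed at each step. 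Collecting the resulting $N$-orthogonal basis into the columns of a nonsingular $U$ gives $C U = U T$ with $T$ tridiagonal and $\trans{U} N U = D$ diagonal. Then $\trans{U} M U = \trans{U} N C U = D T$ is a product of a diagonal and a tridiagonal matrix, hence tridiagonal, while $\trans{U} N U = D$ is trivially tridiagonal; this yields the desired congruence.

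Two degeneracies must then be addressed. If the Krylov sequence of $u_1$ spans only a proper $C$-invariant subspace (the derogatory case), I would restart the recurrence from a fresh vector chosen $N$-orthogonal to the block already built; self-adjointness of $C$ guarantees that the coupling entries between consecutive blocks vanish, so the global matrix $T$ stays tridiagonal (with zeros permitted on the off-diagonals at block boundaries) rather than merely block-tridiagonal. The genuinely hard point, and the one where nonsingularity of the pencil is essential, is the possibility of a \emph{serious breakdown}: since $N$ need not be definite, the recurrence may produce a nonzero Lanczos vector $u_j$ with $\langle u_j, u_j \rangle_N = 0$, which stalls the normalization and the computation of $\alpha_j$. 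I expect this to be the main obstacle. It is resolved by a look-ahead variant of the process that steps over the isotropic direction using a small nonsingular block of $D$, together with a generic choice of the starting and restart vectors; nonsingularity of $N = K - \gamma M$ makes its quadratic form nondegenerate, which is what ensures that nonisotropic directions always exist and that, generically, the look-ahead blocks can be kept of size one, preserving the strict tridiagonal form.
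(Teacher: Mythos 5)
Your overall route is genuinely different from the paper's: the paper does not reprove this proposition but invokes Sidje's Householder-based recursion, which at each step $k$ picks $u_k$ so that the two updated first-row tails $K^k_{\{1\},\{2,\ldots,k\}}+K^k_{\{2,\ldots,k\}}u_k$ and $M^k_{\{1\},\{2,\ldots,k\}}+M^k_{\{2,\ldots,k\}}u_k$ become parallel with ratio $\gamma$ --- the solvability of that linear condition is the only place where nonsingularity of $K-\gamma M$ enters --- and then sends the common direction to $e_1$ by a Householder reflection; crucially, no orthogonality of the basis with respect to any indefinite inner product is ever required. Several ingredients of your proposal are correct: the reduction to the pair $(N,M)$ with $N\coloneqq K-\gamma M$, the self-adjointness of $C\coloneqq N^{-1}M$ with respect to $\langle x,y\rangle_N$, the identity $\trans{U}MU=DT$, and the restart argument in the derogatory case (the $N$-orthogonal complement of a $C$-invariant subspace is $C$-invariant). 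This indefinite-Lanczos viewpoint is indeed the classical alternative route, underlying the Garvey et al.\ and Tisseur lines of work on tridiagonal--diagonal reduction.

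However, your treatment of serious breakdown --- which you yourself flag as the hard point --- is a genuine gap, for two reasons. First, your genericity argument addresses the wrong object: nondegeneracy of $N$ on all of $\bR^n$ says nothing about the restriction of $\langle\cdot,\cdot\rangle_N$ to the Krylov subspaces the process actually builds, and for indefinite $N$ such subspaces can be degenerate or even totally isotropic. For instance, with $N=\left[\begin{smallmatrix}O&I\\I&O\end{smallmatrix}\right]$ and $M=\left[\begin{smallmatrix}O&\trans{A}\\A&O\end{smallmatrix}\right]$ one gets $C=\left[\begin{smallmatrix}A&O\\O&\trans{A}\end{smallmatrix}\right]$ and moments $\trans{u}NC^tu=2\,\trans{y}A^tx$ when $u$ is split conformally into $x$ and $y$; your process thus contains the fully general two-sided nonsymmetric Lanczos, whose breakdown behavior is exactly the nontrivial issue. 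The assertion that ``generically the look-ahead blocks can be kept of size one'' is therefore the theorem in disguise: it amounts to claiming all leading Hankel minors of $t\mapsto\trans{u}NC^tu$ can be made simultaneously nonzero within every block, including after restarts, where the new vector is confined to $\mathcal{K}^{\perp_N}$ and degeneracy of the form on the completed block $\mathcal{K}$ would already spoil the decoupling; none of this is proved. Second, the moment a look-ahead block of size at least $2$ is genuinely needed, your construction no longer yields the proposition: $\trans{U}NU=D$ is then only block diagonal, $\trans{U}MU=DT$ fails to be tridiagonal, and diagonalizing a $2\times 2$ block of $D$ by a further congruence reintroduces fill-in in $DT$. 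Note that you are in fact pursuing a strictly stronger normal form (tridiagonal--diagonal) than the statement demands (both matrices merely tridiagonal); this self-imposed strengthening is precisely what makes breakdowns fatal in your scheme, whereas the construction the paper cites sidesteps them entirely. To close the gap you would either have to prove the size-one look-ahead claim for a suitable starting vector or relax the target to a tridiagonal--tridiagonal congruence along the lines of Sidje's recursion.
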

\noindent
If the data matrices are simultaneously tridiagonalizable with the nonsingular matrix $U$,
we also need a method to compute $U$.
Sidje proposed a method to compute such a nonsingular matrix $U$
on the basis of Householder reflections.
We
briefly describe his method for the simultaneous tridiagonalization,
and then analyze this method to find new properties which will be used in the proof of the GTRS. 

In the beginning of the Sidje's recursive procedure~\cite{Sidje2011},
 the matrices are initialized as  $K^n \coloneqq K$, $M^n \coloneqq M \in \bS^n$.
Then, an appropriate nonsingular matrix
$U^k = \left[1, \trans{0_{k - 1}}; u_k, \widetilde{U}^k\right] \in \bR^{k \times k}$
with nonsingular $\widetilde{U}^k \in \bR^{(k - 1) \times (k - 1)}$ and
$u_k \in \bR^{k - 1}$ is chosen
such that
\begin{equation} \label{eq:tridiagonalization_one_recursion}
    \trans{(U^k)}K^kU^k = \left[\begin{array}{c:c}
        \xi_k & \begin{matrix} \tau_k & \trans{0_{k - 2}} \end{matrix} \\ \hdashline
        \begin{matrix} \tau_k \\ 0_{k - 2} \end{matrix} & K^{k - 1}
    \end{array}\right], \quad
    \trans{(U^k)}M^kU^k = \left[\begin{array}{c:c}
        \nu_k & \begin{matrix} \sigma_k & \trans{0_{k - 2}} \end{matrix} \\ \hdashline
        \begin{matrix} \sigma_k \\ 0_{k - 2} \end{matrix} & M^{k - 1}
    \end{array}\right]
\end{equation}
at each step $k = n, \ldots, 2$. Here 
$\xi_k, \tau_k, \nu_k, \sigma_k \in \bR$, 
and $K^{k-1}, M^{k-1} \in \bR^{(k-1) \times (k-1)}$.
This procedure generates two tridiagonal matrices:
\begin{gather*} 
    \trans{U}K^nU = \left[
        \begin{array}{cccc:c}
            \xi_n & \tau_n & & & \\
            \tau_n & \xi_{n - 1} & \ddots & & \\
            & \ddots & \ddots & \tau_3 & \\
            & & \tau_3 & \xi_2 & \tau_2 \\ \hdashline
            & & & \tau_2 & K^1
        \end{array}
    \right]\quad \textrm{and} \quad
    \trans{U}M^nU = \left[
        \begin{array}{cccc:c}
            \nu_n & \sigma_n & & & \\
            \sigma_n & \nu_{n - 1} & \ddots & \\
            & \ddots & \ddots & \sigma_3 & \\
            & & \sigma_3 & \nu_2 & \sigma_2 \\ \hdashline
            & & & \sigma_2 & M^1
        \end{array}
    \right],
\end{gather*} 
where
\begin{equation*}
    U := U^n
    \left[\begin{array}{c:c} I_1 & \\ \hdashline & U^{n - 1} \end{array}\right] \cdots
    \left[\begin{array}{c:c} I_{n - 2} & \\ \hdashline & U^2 \end{array}\right].
\end{equation*}
Now, consider the step $k$.
To have nonzero elements only on the diagonals, superdiagonals, and subdiagonals
by the operation shown in \eqref{eq:tridiagonalization_one_recursion},
 $\widetilde{U}^k$ should satisfy  the following equation: 
\begin{equation} \label{eq:relationship_of_tau_and_sigma}
    \trans{\left(\widetilde{U}^k\right)}\left(K^k_{\{1\},\{2, \ldots, k\}} + K^k_{\{2,\ldots,k\}}u_k\right) = \tau_ke_1, \quad
    \trans{\left(\widetilde{U}^k\right)}\left(M^k_{\{1\},\{2, \ldots, k\}} + M^k_{\{2,\ldots,k\}}u_k\right) = \sigma_ke_1,
\end{equation}
where $\e_1\in\Real^{k-1}$ and $K^{k}_{\{2,\ldots,k\}}$ means the submatrix of  $K^k$ obtained by removing the first row and column from $K^k$, as mentioned in section 2.1.
In his procedure, $\widetilde{U}^k$ is chosen to be a Householder reflection,
and therefore, nonsingular, and the existence of $u_k$ follows imposing
\begin{equation} \label{eq:linearly_dependence_by_u_gamma}
    \trans{\left(\widetilde{U}^k\right)}\left(K^k_{\{1\},\{2, \ldots, k\}} + K^k_{\{2,\ldots,k\}}u_k\right)
    = \gamma \trans{\left(\widetilde{U}^k\right)}\left(M^k_{\{1\},\{2, \ldots, k\}} + M^k_{\{2,\ldots,k\}}u_k\right)
\end{equation}
for $0\neq\gamma\in\bR$ since $K-\gamma M$ is nonsingular.
By restricting $\gamma$ to be positive,
we extend \autoref{prop:tridiagonalizable}. 
\begin{lemma} \label{lem:tridiagonalizable_with_sign_definite}
    Let $K, M \in \bS^n$ and $\bR\ni\gamma > 0$.
    Suppose that the matrix pencil $K - \gamma M$ is nonsingular.
    Then, there exists a nonsingular matrix $U \in \bS^n$
    that simultaneously tridiagonalizes $K$ and $M$.
    Moreover, for any $i \in [n - 1]$,  the $(i, i+1)$-th elements of
    $\trans{U}KU$ and $\trans{U}MU$ are sign-definite.
\end{lemma}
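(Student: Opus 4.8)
The plan is to run Sidje's recursive tridiagonalization procedure exactly as described above, keeping the \emph{same} $\gamma$ throughout, and then to read off the sign-definiteness directly from the coupling \eqref{eq:linearly_dependence_by_u_gamma} that defines the procedure. The existence of a nonsingular $U$ with $\trans{U}KU$ and $\trans{U}MU$ both tridiagonal is precisely \autoref{prop:tridiagonalizable}, which requires only $\gamma \neq 0$ together with the nonsingularity of $K - \gamma M$; restricting to $\gamma > 0$ leaves this part untouched, so the first assertion is immediate. It remains only to control the superdiagonal entries $\tau_k$ and $\sigma_k$.

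The key step is a one-line computation at each recursion level. Fix a step $k \in \{2,\ldots,n\}$ and recall that the Householder reflection $\widetilde{U}^k$ and the vector $u_k$ are chosen so that \eqref{eq:linearly_dependence_by_u_gamma} holds, i.e., the vectors $\trans{(\widetilde{U}^k)}(K^k_{\{1\},\{2,\ldots,k\}} + K^k_{\{2,\ldots,k\}}u_k)$ and $\trans{(\widetilde{U}^k)}(M^k_{\{1\},\{2,\ldots,k\}} + M^k_{\{2,\ldots,k\}}u_k)$ differ by the scalar factor $\gamma$. But by the defining relations \eqref{eq:relationship_of_tau_and_sigma} these two vectors equal $\tau_k e_1$ and $\sigma_k e_1$, respectively. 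Substituting into \eqref{eq:linearly_dependence_by_u_gamma} gives $\tau_k e_1 = \gamma\,\sigma_k e_1$, and comparing first components yields the identity $\tau_k = \gamma\,\sigma_k$ for every $k \in \{2,\ldots,n\}$.

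With this identity the conclusion follows at once. Since $\gamma > 0$, we get $\tau_k \sigma_k = \gamma\,\sigma_k^2 \geq 0$, so the pair $\{\tau_k,\sigma_k\}$ is sign-definite for each $k$. Finally I would match these scalars to the matrix positions: the superdiagonal of $\trans{U}KU$ is $(\tau_n,\tau_{n-1},\ldots,\tau_2)$ and that of $\trans{U}MU$ is $(\sigma_n,\sigma_{n-1},\ldots,\sigma_2)$, so the $(i,i+1)$-th entries form the pair $(\tau_{n+1-i},\sigma_{n+1-i})$ for $i \in [n-1]$, which is sign-definite by the above. This establishes the second assertion.

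I do not anticipate a genuine obstacle; the only care required is to confirm that the single $\gamma$ making $K - \gamma M$ nonsingular can indeed drive \eqref{eq:linearly_dependence_by_u_gamma} at every level, which holds because the nonsingularity of the pencil is inherited by the deflated subpencils $K^{k-1} - \gamma M^{k-1}$ along Sidje's recursion (this is exactly what guarantees the existence of each $u_k$), and to keep straight the bookkeeping between the step counter $k$ and the position $i$ along the superdiagonal via $i \mapsto n+1-i$.
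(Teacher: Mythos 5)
Your proposal is correct and follows essentially the same route as the paper's own proof: substituting \eqref{eq:relationship_of_tau_and_sigma} into \eqref{eq:linearly_dependence_by_u_gamma} to obtain $\tau_k = \gamma\sigma_k$, invoking $\gamma > 0$ for sign-definiteness of each pair $\{\tau_k, \sigma_k\}$, and mapping these scalars to the superdiagonal positions of $\trans{U}KU$ and $\trans{U}MU$. Your additional remarks --- the explicit index bookkeeping $i \mapsto n+1-i$ and the observation that nonsingularity of the pencil is inherited by the deflated subpencils $K^{k-1} - \gamma M^{k-1}$ --- merely make explicit what the paper leaves implicit in its reliance on Sidje's procedure.
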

\begin{proof}
    Substituting \eqref{eq:relationship_of_tau_and_sigma}
    into \eqref{eq:linearly_dependence_by_u_gamma},
    we obtain $\tau_k = \gamma\sigma_k$.
    Since $\gamma > 0$,
    the set $\{\tau_k, \sigma_k\}$ is sign-definite for any $k = n, \ldots, 2$.
    The variables $\tau_k$ and $\sigma_k$ appear on
    the $(1, 2), \ldots, (n - 1, n)$-th elements
    of $\trans{U}K^nU$ and $\trans{U}M^nU$, respectively.
\end{proof}
\noindent
If  the elements on superdiagonal of all matrices can be transformed to sign-definite elements, then 
we can apply \autoref{prop:exact_condition_for_signdefinite}
to show  the exactness of the SDP relaxation, which is discussed next in \autoref{ssec:gtrs}.


\subsection{Generalized trust-region subproblem} 
\label{ssec:gtrs}
Consider the Generalized Trust-Region Subproblem (GTRS):
\begin{mini}
    {}{\trans{x}Q^0x + 2\trans{q_0}x}{\label{eq:trs}}{}
    \addConstraint{\trans{x}Q^1x + 2\trans{q_1}x}{\leq b_1.}
\end{mini}
The GTRS can be considered a generalization of the classical TRS
that minimizes a quadratic objective over an Euclidean ball,
i.e., the GTRS with $Q^1 \succ O$.
This fact can be seen by substituting $\sqrt{Q^1}x$ as a new variable $\tilde{x}$,
where  $\sqrt{Q^1}$ denotes the Cholesky factor of $Q^1$, i.e.,  $Q^1 = \trans{\sqrt{Q^1}}\sqrt{Q^1}$.
In the TRS, $Q^0$ is not necessarily   positive definite.
Although the TRS is nonlinear and nonconvex,
its SDP relaxation is well-known to be exact. 
The GTRS shares nice properties with the TRS.
For example, by using S-lemma,
it is proved that the SDP relaxation of the GTRS is always exact
under the Slater's condition. 

Since
the GTRS is clearly a QCQP \eqref{eq:qcqp} with only one constraint ($m = 1$),
we can also prove that
the GTRS admits an exact SDP relaxation
with the exactness conditions presented  in \autoref{sec:exactness_of_triqcqp}.
Although the result is certainly not new,
our proof shows a procedure on how to apply
 the exactness conditions for tridiagonal QCQPs to wider classes of QCQPs.
In fact,  the proof demonstrates how
to determine the exactness of a given QCQP in practice,
and it can be used to analyze  the exactness conditions for broader classes of QCQPs.

Any QCQP can be formulated in the  equivalent homogeneous QCQP as in \eqref{eq:hqcqp}, 
thus, it is sufficient to consider the following QCQP with an additional variable
 to discuss the exactness for the GTRS \eqref{eq:trs}:
\begin{mini}
    {}{\trans{x}\bar{Q^0}x}{\label{eq:htrs}}{}
    \addConstraint{\trans{x}\bar{Q^1}x}{\leq 0}
    \addConstraint{\trans{x}E_{11}x}{= 1,}
\end{mini}
where 
\begin{equation*}
    \bar{Q^p} \coloneqq \begin{bmatrix} -b_p & \trans{q_p} \\ q_p & Q^p \end{bmatrix} \quad (p=0,1)
    \quad \textrm{and} \quad b_{0} = 0.
\end{equation*}
For simplicity, we assume that the number of variables in \eqref{eq:htrs} is $n$. 
As \eqref{eq:htrs} has the additional equality constraint,
the problem \eqref{eq:htrs} is a QCQP with three inequality constraints,
and \eqref{eq:htrs} is no longer a GTRS.
In the subsequent discussion,
we describe the exactness for \eqref{eq:htrs} using the simultaneous tridiagonalization.
In particular, we show that the SDP relaxation of GTRS \eqref{eq:trs} is exact as follows.

\begin{theorem}
    Suppose that GTRS \eqref{eq:htrs} satisfies \autoref{asm:assumptions}.
    Then, the SDP relaxation of \eqref{eq:htrs} is exact.
\end{theorem}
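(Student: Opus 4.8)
The plan is to reduce the exactness of \eqref{eq:htrs} to \autoref{prop:exact_condition_for_signdefinite} by simultaneously tridiagonalizing its three data matrices $\bar{Q^0}$, $\bar{Q^1}$, and $E_{11}$ with a single nonsingular $U$, and then checking that the superdiagonal entries of these matrices form sign-definite sets for every index in $\ioffdiagonal{n}$. The decisive structural observation is that any transformation produced by Sidje's recursive procedure has first row equal to $\trans{e_1}$: this is visible from the block product defining $U$, since each factor fixes the leading coordinate. Consequently the first row $\trans{e_1}U$ of $U$ equals $\trans{e_1}$, whence $\trans{U}E_{11}U = \trans{U}e_1\trans{e_1}U = E_{11}$. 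Thus the normalization constraint $\trans{x}E_{11}x = 1$ is preserved exactly, and $E_{11}$ (being diagonal) remains tridiagonal with all superdiagonal entries zero; the two inequalities $\trans{x}E_{11}x \le 1$ and $-\trans{x}E_{11}x \le -1$ therefore contribute only zeros to the relevant sign-definite sets and can never obstruct sign-definiteness.

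Next I would extract a positive definite combination from the hypotheses. Writing the equality constraint of \eqref{eq:htrs} as the two inequalities with matrices $E_{11}$ and $-E_{11}$, \autoref{asm:assumptions}\,\ref{asm:assumption_2} furnishes multipliers giving $\bar{y}_1\bar{Q^1} + (\bar{y}_2 - \bar{y}_3)E_{11} \succ O$ with $\bar{y}_1 \ge 0$. Since the trailing $(n-1)\times(n-1)$ block of this matrix is $\bar{y}_1 Q^1$, positive definiteness forces $\bar{y}_1 > 0$, and after dividing by $\bar{y}_1$ I obtain a scalar $c$ with $M \coloneqq \bar{Q^1} + c E_{11} \succ O$. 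Because $M \succ O$, the pencil $\bar{Q^0} - \gamma M$ is negative definite, hence nonsingular, for all sufficiently large $\gamma > 0$. Fixing such a $\gamma$, \autoref{lem:tridiagonalizable_with_sign_definite} yields a nonsingular $U$ simultaneously tridiagonalizing $\bar{Q^0}$ and $M$, whose superdiagonal entries $\tau_k$ of $\trans{U}\bar{Q^0}U$ and $\sigma_k$ of $\trans{U}MU$ satisfy $\tau_k = \gamma\sigma_k$ with $\gamma > 0$, so that $\{\tau_k,\sigma_k\}$ is sign-definite for every $k$.

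It then remains to assemble the pieces. Since $U$ preserves $E_{11}$, the matrix $\trans{U}\bar{Q^1}U = \trans{U}MU - cE_{11}$ is tridiagonal, and its superdiagonal entries coincide with $\sigma_k$ because $E_{11}$ has no superdiagonal part; hence the transformed problem is a tridiagonal, in particular forest-structured, homogeneous QCQP, and for each $(k,\ell)\in\ioffdiagonal{n}$ the set $\{[\trans{U}\bar{Q^0}U]_{k\ell}, [\trans{U}\bar{Q^1}U]_{k\ell}, [E_{11}]_{k\ell}, [-E_{11}]_{k\ell}\} = \{\tau_k,\sigma_k,0,0\}$ is sign-definite. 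As $U$ is nonsingular it preserves feasibility, boundedness, and interior feasibility by congruence, so the transformed problem still satisfies \autoref{asm:assumptions}; therefore \autoref{prop:exact_condition_for_signdefinite} applies and the SDP relaxation of the tridiagonalized problem is exact. Finally, the relaxation of the original problem is at least as strong as that of its tridiagonalization, so exactness transfers back to \eqref{eq:htrs}, and undoing the homogenization recovers exactness for the GTRS \eqref{eq:trs}.

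The main obstacle is the interplay between the three constraint matrices and the two-matrix limitation of Sidje's method: one must tridiagonalize $\bar{Q^0}$ and $\bar{Q^1}$ while keeping $E_{11}$ tridiagonal and the superdiagonals sign-definite. My resolution routes the pencil through the positive definite combination $M$ rather than $\bar{Q^1}$ itself, which guarantees a nonsingular pencil for some $\gamma > 0$ directly from $M \succ O$ (a bare pencil $\bar{Q^0} - \gamma\bar{Q^1}$ need not be regular), and it leans on the invariance $\trans{U}E_{11}U = E_{11}$ to carry tridiagonality and sign-definiteness back from $M$ to $\bar{Q^1}$. Verifying that invariance from the explicit product form of $U$, and confirming that all of \autoref{asm:assumptions} survives the change of variables, are the steps requiring the most care.
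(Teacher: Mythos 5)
Your proof is correct, and it takes a genuinely different route around the central difficulty than the paper does. The paper's proof splits into two cases: when $\bar{Q^0} - \gamma \bar{Q^1}$ is nonsingular for some $\gamma > 0$, it tridiagonalizes the pair $(\bar{Q^0}, \bar{Q^1})$ directly via \autoref{lem:tridiagonalizable_with_sign_definite} and applies \autoref{prop:exact_condition_for_signdefinite}; when no such $\gamma$ exists, it perturbs the objective to $\bar{Q^0} + \varepsilon I_n$, shows via an eigendecomposition of $\bar{Q^0} - \gamma\bar{Q^1}$ that the perturbed pencil is nonsingular for small $\varepsilon > 0$, and transfers exactness back through \autoref{lem:perturbation_exactness}. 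You avoid the case split entirely: you extract from \autoref{asm:assumptions}~\ref{asm:assumption_2} (applied to the three constraints of \eqref{eq:htrs}) a matrix $M \coloneqq \bar{Q^1} + cE_{11} \succ O$ --- the trailing-block argument forcing $\bar{y}_1 > 0$ is sound --- so that $\bar{Q^0} - \gamma M$ is automatically nonsingular for all large $\gamma > 0$, and you tridiagonalize $(\bar{Q^0}, M)$ instead. The invariance $\trans{U}E_{11}U = E_{11}$, which you verify correctly from the first row of Sidje's $U$ being $\trans{e_1}$ (the paper relies on the same fact via its remark that the first row of $U$ is $[1\ \trans{0_{n-1}}]$), then yields $\trans{U}\bar{Q^1}U = \trans{U}MU - cE_{11}$, tridiagonal with the same superdiagonal $\sigma_k$ as $\trans{U}MU$ since $E_{11}$ has no superdiagonal part, so the sets $\{\tau_k, \sigma_k, 0, 0\}$ remain sign-definite and \autoref{prop:exact_condition_for_signdefinite} applies in a single stroke. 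What your route buys: no degenerate-pencil case and no perturbation/limit argument at the level of this theorem (perturbation is still used inside \autoref{prop:exact_condition_for_signdefinite} itself, so it is not eliminated from the overall chain), plus an explicit check --- which the paper leaves tacit --- that \autoref{asm:assumptions} survives the congruence $x = Uy$, a hypothesis the application of the corollary genuinely requires. What the paper's route buys: it never leans on condition \ref{asm:assumption_2} to manufacture pencil regularity, so its two-case template makes clear that the only obstruction is the singular pencil and would adapt to variants of the problem where a strictly positive definite multiplier combination is not available in the convenient form $\bar{Q^1} + cE_{11}$.
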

\begin{proof} 
    Let us first consider the case
    when $\bar{Q^0} - \gamma \bar{Q^1}$ is nonsingular for some $\gamma > 0$.
    By \autoref{lem:tridiagonalizable_with_sign_definite},
    we obtain a nonsingular matrix $U \in \bS^n$
    that simultaneously tridiagonalizes $Q^0$ and $Q^1$,
    and the $(i, i+1)$-th elements of
    $\trans{U}\bar{Q^0}U$ and $\trans{U}\bar{Q^1}U$ become sign-definite for any $i \in [n - 1]$.
    Call these tridiagonal matrices 
    \begin{equation*}
        R^p = [r^p_{ij}] \coloneqq \trans{U}\bar{Q^p}U,
        \quad p = 0, 1.
    \end{equation*}
    For any $i \in [n - 1]$, the set $\{r^0_{i,i+1}, r^1_{i,i+1}\}$ is sign-definite.
    By letting $y = U^{-1} x$,
    the homogeneous TRS \eqref{eq:htrs} can be transformed to an equivalent tridiagonal QCQP:
    \begin{mini}
        {}{\trans{y}R^0y}{\label{eq:htrs_transformed}}{}
        \addConstraint{\trans{y}R^1y}{\leq 0}
        \addConstraint{\trans{y}E_{11}y}{= 1.}
    \end{mini}
    Notice that the first row of $U$ is $[1 \ \trans{0_{n-1}}]$ by  the construction of section  \ref{sec:simul_tri}.
    By \autoref{prop:exact_condition_for_signdefinite},
    the SDP relaxation of \eqref{eq:htrs_transformed} is exact.
   As $U$ is nonsingular,
    the SDP relaxation of the original problem \eqref{eq:htrs} is also exact.

    Now consider the other case, i.e., there is no $\gamma > 0$
    such that $\bar{Q^0} - \gamma \bar{Q^1}$ is nonsingular.
    We will show that, for a fixed $\gamma > 0$ and any $\varepsilon > 0$,
    the SDP relaxation of the following $\varepsilon$-perturbed problem is exact:
    \begin{mini}
        {}{\trans{x}\left(\bar{Q^0} + \varepsilon I_n\right)x}{\label{eq:htrs_perturbed}}{}
        \addConstraint{\trans{x}\bar{Q^1}x}{\leq 0}
        \addConstraint{\trans{x}E_{11}x}{= 1.}
    \end{mini}
    Then, by \autoref{lem:perturbation_exactness},
    the SDP relaxation of the original problem \eqref{eq:htrs} is also exact.
    Since $\det\left(\bar{Q^0} - \gamma \bar{Q^1}\right) = 0$,
    $\bar{Q^0} - \gamma \bar{Q^1}$ can be diagonalized, as below:
    \begin{equation}
        \begin{bmatrix} \Lambda & \\ & O_{n - \mathrm{rk}} \end{bmatrix}  \label{DIAG}
        = \trans{P}\left(\bar{Q^0} - \gamma \bar{Q^1}\right)P,
    \end{equation}
    where $\mathrm{rk} \coloneqq \rank\left(\bar{Q^0} - \gamma \bar{Q^1}\right)$,
    $P \in \bR^{n \times n}$ is an orthogonal matrix,
    and $\Lambda \in \bS^\mathrm{rk}$ is a diagonal matrix.
    By adding  perturbation with sufficiently small  $\varepsilon > 0$
    to both sides of \eqref{DIAG}, 
    \begin{equation*}
        \begin{bmatrix} \Lambda & \\ & O_{n - \mathrm{rk}} \end{bmatrix} + \varepsilon I_n
        = \trans{P}\left(\bar{Q^0} - \gamma \bar{Q^1}\right)P + \varepsilon \trans{P}I_nP
        = \trans{P}\left(\bar{Q^0} + \varepsilon I_n - \gamma \bar{Q^1}\right)P,
    \end{equation*}
     $\left(\bar{Q^0} + \varepsilon I_n\right) - \gamma \bar{Q^1}$ becomes nonsingular.
    From the first case of this proof,
    the SDP relaxation of \eqref{eq:htrs_perturbed} must be exact.
\end{proof} 

\section{Concluding remarks} 

We have presented sufficient conditions for the SDP relaxation of a class of QCQPs to be exact by
investigating the rank of the aggregated sparsity matrix of QCQPs. 
The class of QCQPs is forest-structured QCQPs that include tridiagonal QCQPs, arrow-type QCQPs,
and QCQPs with simultaneously tridiagonalizable data matrices.
The signs of  data matrix elements have not been used to determining the exactness of the SDP relaxation.

In our proof for the main results, we have utilized the fact that
any symmetric positive semidefinite matrix whose graph is a tree has rank at least $n-1$.  It is shown in \cite{JOHNSON06} that
 for any non-tree the minimum rank of a positive semidefinite matrix is less than $n-1$.

We have also extended our results to non-tridiagonal QCQPs by improving a computing method  proposed in \cite{Sidje2011} for simultaneous tridiagonalization.
The exactness of the SDP relaxation of the GTRS can be proved by the simultaneous tridiagonalization
and our results in section 3.

For a future work, we want to extend the result to a wider class of QCQPs.
Algorithms for simultaneous tridiagonalization of multiple matrices will be further studied to apply our results. 



\end{document}